 \newtheorem{thm}{Theorem}[section]
 \newtheorem{cor}[thm]{Corollary}
 \newtheorem{lem}[thm]{Lemma}
 \newtheorem{prop}[thm]{Proposition}
 \theoremstyle{definition}
 \newtheorem{dfn}[thm]{Definition}
 \theoremstyle{remark}
 \newtheorem{rem}[thm]{Remark}
 \newtheorem{ex}[thm]{Example}
 \numberwithin{equation}{section}
\begin{document}

\title[ common  spectral properties for bounded linear operators]
{A note on the common  spectral properties for bounded linear operators
}

\author[]{Hassane Zguitti}

\address{Hassane Zguitti: 
Department of Mathematics, Dhar El Mahraz Faculty of Science, Sidi Mohamed Ben Abdellah University, BO 1796 Fes-Atlas, 30003 Fez Morocco.
}

\email{hassane.zguitti@usmba.ac.ma}
\subjclass[2010]{47A10, 47A53, 47A55.}
\keywords{Jacobson's lemma, common spectral properties, regularity}

\begin{abstract} Let $X$ and $Y$ be Banach spaces, $A\,:\,X\rightarrow Y$ and $B,\,C\,:\,Y\rightarrow X$ be bounded linear operators. We prove that if $A(BA)^2=ABACA=ACABA=(AC)^2A,$ then 
$$\sigma_{*}(AC)\setminus\{0\}=\sigma_{*}(BA)\setminus\{0\}$$ where $\sigma_*$  runs over a large of spectra originated by regularities.

\end{abstract}

\maketitle

\section{Introduction}
Throughout this paper $\mathcal{L}(X,Y)$ denotes the set of all bounded linear operators acting from a complex Banach space $X$ into another one, $Y$,  and $\mathcal{L}(X)$ is a short for $\mathcal{L}(X,X)$. 
Given two  operators $A\in\mathcal{L}(X,Y)$ and $B\in\mathcal{L}(Y,X)$,   Jacobson's Lemma asserts that
\begin{equation}\label{Jacob} \sigma(AB)\setminus\{0\}=\sigma(BA)\setminus\{0\}
\end{equation}
where $\sigma(\cdot)$ denotes the ordinary spectrum.
 \smallskip
 
Several works have been devoted to equality (\ref{Jacob})  by showing that $AB-I$ and $BA-I$ share many spectral properties. See \cite{Bar, BZ1, CS, CDH, MZ, Sch1, Sch2, ZZ0, ZZ1} and the references therein. Barnes in \cite{Bar} extended (\ref{Jacob}) to other part of the spectrum and showed that $AB-I$ and $BA-I$ share some spectral properties. In \cite{BZ1}, Benhida and Zerouali investigated equation (\ref{Jacob}) for various Taylor joint spectra. For $A$ and $B$ satisfying $ABA=A^2$ and $BAB=B^2$,  Schmoeger \cite{Sch1, Sch2} and Duggal \cite{Du} showed that $A$, $B$, $AB$ and $BA$ share spectral properties. Corach {\it et al.} \cite{CDH} investigated common properties for $ac-1$ and $ba-1$ where $a, b$ and $c$ are elements in associative ring such that $aba=aca$. For bounded linear operators $A$, $B$ and $C$, Zeng and Zhong \cite{ZZ1} studied  spectral properties for $AC$ and $BA$ under the condition $ABA=ACA$. If $C=I$ in the last condition, one can retrieve Schmoeger's result. For operators $A$, $B$, $C$ and $D$ satisfying $ACD=DBD$ and $BDA=ACA$, Yan and Fang \cite{YF} investigated spectral properties for $AC$ and $BD$. Recently, \cite{CS} studied  common properties for $ac$ and $ba$ for elements in a ring satisfying $a(ba)^2=abaca=acaba=(ac)^2a$.
 \smallskip

 The paper is a continuation of \cite{CS} and \cite{Zg}. The aim of this paper is to extend recent results to bounded linear operators $A\in\mathcal{L}(X,Y)$ and $B,C\in\mathcal{L}(Y,X)$ satisfying
 $$A(BA)^2=ABACA=ACABA=(AC)^2A.$$
In section two we give basic definitions and notation which we need in the sequel. Section 3 is devoted to the main results of the paper. In Theorem \ref{thm.1} we prove that if $A\in\mathcal{L}(X,Y)$ and $B,C\in\mathcal{L}(Y,X)$ satisfy 
 $A(BA)^2=ABACA=ACABA=(AC)^2A,$
then 
$$\sigma_{*}(AC)\setminus\{0\}=\sigma_{*}(BA)\setminus\{0\}$$ where $\sigma_*$  runs over a large of spectra originated by regularities.
\section{Basic definitions and notations}
For an operator  $T\in\mathcal{L}(X)$, let $\mathcal{N}(T)$ and $\mathcal{R}(T)$ stand for the {\it kernel}, respectively the {\it range} of $T$. 
An operator $T\in\mathcal{L}(X)$ is said to be an {\it upper semi-Fredholm} operator if
 ${\mathcal R}(T)$ is closed and $\dim \mathcal{N}(T)<\infty$, and $T$ is said to be a {\it lower semi-Fredholm} operator if $\mbox{codim}\mathcal{N}(T)<\infty$. One says that $T$ is a  {\it Fredholm} operator if $\dim \mathcal{N}(T)<\infty$ and $\mbox{codim}\mathcal{N}(T)<\infty$. If $T$ is either upper or lower semi-Fredholm then $T$ is said {\it semi-Fredholm} operator.  In this case the {\it  index} of $T$ is defined by $\mbox{ind}(T)=\dim \mathcal{N}(T)-\dim \mathcal{R}(T)$.

The {\it ascent}  of $T$, $asc(T)$, is the smallest nonnegative integer  $n$ for which $N(T^n)=N(T^{n+1})$, i.e.; $asc(T)=\inf\{n\in\mathbb{Z}_+\,:\,\mathcal{N}(T^n)=\mathcal{N}(T^{n+1})\}$. If no such integer exists, we shall say that $T$ has infinite ascent. In a similar way,  the {\it descent} of $T$, $dsc(T)$ , is defined by $dsc(T)=\inf\{n\in\mathbb{Z}_+\,:\,\mathcal{R}(T^n)=\mathcal{R}(T^{n+1})\}$ and if no such integer exists, we shall say that $T$ has infinite descent. We say that $T$ is {\it left Drazin invertible} if $asc(T)<\infty$ and $\mathcal{R}(T^{asc(T)+1})$ is closed and $T$ is   {\it right Drazin invertible} if $dsc(T)<\infty$ and $\mathcal{R}(T^{dsc(T)})$ is closed. If $T$ is both left and right Drazin invertible, then $T$ is said to be {\it Drazin invertible} ; which is equivalent to  $asc(T)=dsc(T)<\infty$ (see  \cite{Ai_pams}). One says that $T$ is {\it upper semi-Browder} if $T$ is upper semi-Fredholm with finite ascent, and $T$ is {\it lower semi-Browder} if $T$ is lower semi-Fredholm with finite descent. If $T$ is both upper and lower semi-Browder then $T$ is said to be {\it Browder} operator (see \cite{Mu3}).
\smallskip

For each $n\in\mathbb{Z}_+$, let $c_n(T)=\dim R(T^n)/R(T^{n+1})$ and $c_n'(T)=\dim N(T^{n+1})/N(T^n).$
It was proved in \cite[Lemma 3.2]{Gr} that for every $n$, we have $$c_n(T)=\dim X/(R(T)+N(T^n))\mbox{ and }c_n'(T)=\dim N(T)\cap R(T^n).$$ It is easy to see that   $\{c_n(T)\}$ and $\{c_n'(T)\}$ are decreasing sequences and $dsc(T)=\inf\{n\in\mathbb{Z}_+\,:\,c_n(T)=0\},$ 
$asc(T)=\inf\{n\in\mathbb{Z}_+\,:\,c_n'(T)=0\}.$
\smallskip

Following \cite{Mu2},  the {\it essential descent} $dsc_e(T)$ of $T$ is defined by $dsc_e(T)=\inf\{n\in\mathbb{Z}_+\,:\,c_n(T)<\infty\},$ and the {\it essential ascent} $asc_e(T)$ of $T$ is defined by $asc_e(T)=\inf\{n\in\mathbb{Z}_+\,:\,c_n'(T)<\infty\},$ where the infimum over the empty set is taken to be infinite.
\smallskip

Let $\mathcal{N}^\infty(T)$ and $\mathcal{R}^\infty(T)$ denote the {\it hyper-kernel} and the {\it hyper-range} of $T$ defined by $$\mathcal{N}^\infty(T)=\displaystyle{\bigcup_{n=1}^\infty}\mathcal{N}(T^n)\mbox{  and }\mathcal{R}^\infty(T)=\displaystyle{\bigcap_{n=1}^\infty}\mathcal{R}(T^n).$$ One says that $T$ is  {\it semi-regular} if $\mathcal{R}(T)$ is closed and $\mathcal{N}^\infty(T)\subseteq\mathcal{R}(T)$.
\smallskip

For each $n\in\mathbb{Z}_+$, $T\in\mathcal{L}(X)$ induces a linear maps  $\Gamma_n$ from the space $\mathcal{R}(T^n)/\mathcal{R}(T^{n+1})$ into $ \mathcal{R}(T^{n+1})/\mathcal{R}(T^{n+2})$. The dimension of the null space of $\Gamma_n$ will be denoted by $k_n(T)$, i.e.,  $k_n(T)=\dim \mathcal{N}(\Gamma_n).$ 
It follows from \cite[Theorem 3.7]{Gr} that for every $n$, 
\[\begin{array}{lcl}k_n(T)&=&\dim((\mathcal{R}(T^n)\cap \mathcal{N}(T))/(\mathcal{R}(T^{n+1})\cap \mathcal{N}(T)))\\
&=&\dim(\mathcal{R}(T)+\mathcal{R}(T^{n+1}))/(\mathcal{R}(T)+\mathcal{N}(T^n)).
\end{array}
\]
 Let $$k(T)=\displaystyle{\sum_{n=0}^\infty}k_n(T).$$
 Then  it follows from \cite[Theorem 3.7]{Gr} that  
 $k(T)=\dim \mathcal{N}(T)/(\mathcal{N}(T)\cap \mathcal{R}^\infty(T))=\dim(\mathcal{R}(T)+\mathcal{N}^\infty(T))/\mathcal{R}(T).$ 
 The {\it stable nullity} $c(T)$ and the {\it stable defect} $c'(T)$ of $T$ are defined by 
 $$c(T)=\displaystyle{\sum_{n=0}^\infty}c_n(T)\mbox{ and }c'(T)=\displaystyle{\sum_{n=0}^\infty}c_n'(T).$$
Then we have $c(T)=\dim X/\mathcal{R}^\infty(T)\mbox{ and }c'(T)=\dim \mathcal{R}^\infty(T).$
 \smallskip
 
According to  \cite{Lab}, the {\it degree of stable iteration} of $T\in\mathcal{L}(X)$ is defined by $$dis(T)=\inf\{n\in\mathbb{Z}_+\,:\,k_m(T)=0\mbox{ for all }m\geq n\},$$ 
 and  the {\it degree of essential stable iteration} of $T$ (\cite{ZZ0}) is defined is $$dis_e(T)=\inf\{n\in\mathbb{Z}_+\,:\,k_m(T)<\infty\mbox{ for all }m\geq n\}.$$ 
\begin{dfn}
Let $R$ be a non-empty subset of ${\mathcal L}(X)$. $R$ is called a {\it regularity} if it satisfies the following two conditions:
\begin{itemize}
\item[i)] if $n\in\mathbb{N}$, then $A\in R$ if and only if $A^n\in R$ ;
\item[ii)] if $A,B,C$ and $D$ are mutually commuting operators in $\mathcal{L}(X)$ such that $AC+BD=I$, then $AB\in R\mbox{ if and only if }A\in R\mbox{ and }B\in R.$
\end{itemize}
\end{dfn}
 A regularity $R\subset{\mathcal L}(X)$ assigns to each $T\in{\mathcal L}(X)$ a subset of $\mathbb{C}$ defined by $$\sigma_{R}(T)=\{\lambda\in\mathbb{C}\,:\,T-\lambda I\notin R\}$$
and called the {\it spectrum of $T$ corresponding to the regularity} $R$. We note that every regularity $R$ contains all invertible operators, so that $\sigma_{R}(T)\subseteq \sigma(T)$. In general, $\sigma_{R}(T)$ is neither compact nor non-empty (see \cite{Mu1, Mu2, Mu3}).
\smallskip

The regularities $R_i$, where $\,1\leq i\leq 15$, were introduced and studied in \cite{Mu1, Mu2, Mu3} but are in a different form. Regularity $R_{18}$ was introduced by \cite{Ber}, while  $R_{16}, R_{17}$ and $R_{19}$ were introduced by \cite{ZZ0}.

\begin{dfn} 
\[\begin{array}{lcl}
R_1&=&\{T\in\mathcal{L}(X)\,:\,c(T)=0\},\\
R_2&=&\{T\in\mathcal{L}(X)\,:\,c(T)<\infty\},\\
R_3&=&\{T\in\mathcal{L}(X)\,:\mbox{ there exists }d\in\mathbb{Z}_+ \mbox{ such that }c_d(T)=0\mbox{ and }\mathcal{R}(T^{d+1})\mbox{ is closed}\},\\
R_4&=&\{T\in\mathcal{L}(X)\,:\,c_n(T)<\infty, \forall n\in\mathbb{Z}_+\},\\
R_5&=&\{T\in\mathcal{L}(X)\,:\,\mbox{ there exists } d\in\mathbb{Z}_+\mbox{ such that }c_d(T)<\infty\mbox{ and }\mathcal{R}(T^{d+1})\mbox{ is closed}\},\\
R_6&=&\{T\in\mathcal{L}(X)\,:\,c'(T)=0\mbox{ and }\mathcal{R}(T)\mbox{ is closed}\},\\
R_7&=&\{T\in\mathcal{L}(X)\,:\,c'(T)<\infty\mbox{ and }\mathcal{R}(T)\mbox{ is closed}\},\\
R_8&=&\{T\in\mathcal{L}(X)\,:\,\mbox{ there exists } d\in\mathbb{Z}_+\mbox{ such that }c_d'(T)=0\mbox{ and }\mathcal{R}(T^{d+1})\mbox{ is closed}\},\\
R_9&=&\{T\in\mathcal{L}(X)\,:\,c_n'(T)<\infty\mbox{ for every }n\in\mathbb{Z}_+\mbox{ and }\mathcal{R}(T)\mbox{ is closed}\},\\
R_{10}&=&\{T\in\mathcal{L}(X)\,:\,\mbox{ there exists } d\in\mathbb{Z}_+\mbox{ such that }c_d'(T)<\infty\mbox{ and }\mathcal{R}(T^{d+1})\mbox{ is closed}\},\\
R_{11}&=&\{T\in\mathcal{L}(X)\,:\,k(T)=0\mbox{ and }\mathcal{R}(T)\mbox{ is closed}\},\\
R_{12}&=&\{T\in\mathcal{L}(X)\,:\,k(T)<\infty\mbox{ and }\mathcal{R}(T)\mbox{ is closed}\},\\
R_{13}&=&\{T\in\mathcal{L}(X)\,:\,\mbox{ there exists } d\in\mathbb{Z}_+\mbox{ such that }k_n(T)=0\mbox{ for every }n\geq d\mbox{ and }\mathcal{R}(T^{d+1})\mbox{ is closed}\},\\
R_{14}&=&\{T\in\mathcal{L}(X)\,:\,k_n(T)<\infty\mbox{ for every }n\in\mathbb{Z}_+\mbox{ and }\mathcal{R}(T)\mbox{ is closed}\},\\
R_{15}&=&\{T\in\mathcal{L}(X)\,:\,\mbox{ there exists } d\in\mathbb{Z}_+\mbox{ such that }k_n(T)<\infty\mbox{ for every }n\geq d\mbox{ and }\mathcal{R}(T^{d+1})\mbox{ is closed}\},\\
R_{16}&=&\{T\in\mathcal{L}(X)\,:\mbox{ there exists }d\in\mathbb{Z}_+ \mbox{ such that }c_d(T)=0\mbox{ and }\mathcal{R}(T)+N(T^d)\mbox{ is closed}\},\\
R_{17}&=&\{T\in\mathcal{L}(X)\,:\mbox{ there exists }d\in\mathbb{Z}_+ \mbox{ such that }c_d(T)<\infty\mbox{ and }\mathcal{R}(T)+N(T^d)\mbox{ is closed}\},\\
R_{18}&=&\{T\in\mathcal{L}(X)\,:\,\exists d\in\mathbb{Z}_+\mbox{ such that }k_n(T)=0\mbox{ for every }n\geq d\mbox{ and }\mathcal{R}(T)+N(T^d)\mbox{ is closed}\},\\
R_{19}&=&\{T\in\mathcal{L}(X)\,:\,\exists d\in\mathbb{Z}_+\mbox{ such that }k_n(T)<\infty\mbox{ for every }n\geq d\mbox{ and }\mathcal{R}(T)+N(T^d)\mbox{ is closed}\}.
\end{array}
\]
\end{dfn} 

 We have $$R_{1}\subseteq R_{2}=R_{3}\cap R_{4}\subseteq R_{3}\cup R_{4}\subseteq R_{5}\subseteq R_{13},$$
 $$R_{6}\subseteq R_{7}=R_{8}\cap R_{9}\subseteq R_{8}\cup R_{9}\subseteq R_{10}\subseteq R_{13},$$
 $$R_{11}\subseteq R_{12}=R_{13}\cap R_{14}\subseteq R_{13}\cup R_{14}\subseteq R_{15}.$$
 
 It was proved in \cite[Proposition 2.7]{ZZ0} that 
 \[\begin{array}{lcl}
R_3&=&\{T\in\mathcal{L}(X)\,:\,dsc(T)<\infty\mbox{ and }\mathcal{R}(T^{dsc(T)+1})\mbox{ is closed}\},\\
R_5&=&\{T\in\mathcal{L}(X)\,:\,dsc_e(T)<\infty\mbox{ and }\mathcal{R}(T^{dsc_e(T)+1})\mbox{ is closed}\},\\
R_8&=&\{T\in\mathcal{L}(X)\,:\,asc(T)<\infty\mbox{ and }\mathcal{R}(T^{asc(T)+1})\mbox{ is closed}\},\\
R_{10}&=&\{T\in\mathcal{L}(X)\,:\,asc_e(T)<\infty\mbox{ and }\mathcal{R}(T^{asc_e(T)+1})\mbox{ is closed}\},\\
R_{13}&=&\{T\in\mathcal{L}(X)\,:\,dis(T)<\infty\mbox{ and }\mathcal{R}(T^{dis(T)+1})\mbox{ is closed}\},\\
R_{15}&=&\{T\in\mathcal{L}(X)\,:\,dis_e(T)<\infty\mbox{ and }\mathcal{R}(T^{dis_e(T)+1})\mbox{ is closed}\}.
\end{array}
\]
 
The operators of $R_{1},  R_{2}, R_{3},  R_{4}$ and $R_{5}$ are surjective, lower semi-Browder, right Drazin invertible, lower semi-Fredholm and right essentially Drazin invertible operators, respectively. The operators of $R_{6}, R_{7}, R_{8}, R_{9}$ and $R_{10}$   are bounded below, upper semi-Browder, left Drazin invertible, upper semi-Fredholm and left essentially Drazin invertible operators, respectively.  The operators of $R_{11}, R_{12}$ and $R_{13}$ are semi-regular, essentially semi-regular and quasi-Fredholm operators. The operators of $R_{18}$ are the operators with eventual topological uniform descent.     
 \smallskip

\section{Main results}
The following is our main result.
\begin{thm}\label{thm.1} Let $A\in\mathcal{L}(X,Y)$ and $B,C\in\mathcal{L}(Y,X)$ such that $A(BA)^2=ABACA=ACABA=(AC)^2A.$
Then $$\sigma_{R_i}(AC)\setminus\{0\}=\sigma_{R_i}(BA)\setminus\{0\}\mbox{ for } 1\leq i\leq 19.$$
\end{thm}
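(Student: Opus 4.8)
The plan is to fix $\lambda\neq 0$, write $S:=BA\in\mathcal L(X)$ and $T:=AC\in\mathcal L(Y)$, and reduce everything to the two auxiliary operators
$$\Phi:=ABA\in\mathcal L(X,Y),\qquad \Psi:=BACAC\in\mathcal L(Y,X).$$
The first step is to squeeze out of the hypothesis the four identities
$$T\Phi=\Phi S,\qquad S\Psi=\Psi T,\qquad \Phi\Psi=T^{4},\qquad \Psi\Phi=S^{4}. \tag{$\ast$}$$
For this I would first record that $A(BA)^2=ABACA=ACABA=(AC)^2A$ gives in particular $A(B-C)ABA=0$ and $A(B-C)ACA=0$; the latter says $A(B-C)A$ annihilates $\mathcal R(CA)$, hence (since $\mathcal R(CAC),\mathcal R(CACAC)\subseteq\mathcal R(CA)$) it also annihilates $\mathcal R(CAC)$ and $\mathcal R(CACAC)$, so $A(B-C)(AC)^{k}=0$ for $k\geq2$. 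With these, plus the trivial intertwinings $(AC)A=A(CA)$, $(AB)A=A(BA)$, $(BA)B=B(AB)$, $(CA)C=C(AC)$, each identity in $(\ast)$ is a one-line check: $T\Phi=ACABA=A(BA)^2=\Phi S$; $S\Psi-\Psi T=B\bigl[A(B-C)ACA\bigr]C=0$; $\Psi\Phi=(BA)(CA)(BA)^2=(BA)^4$ since $(BA)(CA)(BA)^2-(BA)^4=-(BA)\bigl(A(B-C)ABA\bigr)(BA)=0$; and $\Phi\Psi=(AB)^2(AC)^2=(AC)^4$ since $\bigl((AB)^2-(AC)^2\bigr)(AC)^2=ABA\,(B-C)(AC)^2+A(B-C)(AC)^3=0$, using $(AB)^2-(AC)^2=ABA(B-C)+A(B-C)AC$.

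Granting $(\ast)$, the second step is a routine transfer. From $T\Phi=\Phi S$ we get $(T-\lambda)^n\Phi=\Phi(S-\lambda)^n$, and from $S\Psi=\Psi T$ we get $(S-\lambda)^n\Psi=\Psi(T-\lambda)^n$, for all $n$; thus $\Phi$ carries $\mathcal N((S-\lambda)^n)$ into $\mathcal N((T-\lambda)^n)$ and $\mathcal R((S-\lambda)^n)$ into $\mathcal R((T-\lambda)^n)$, and $\Psi$ reverses this. Since $\lambda\neq0$, the operators $\Psi\Phi=S^{4}$ and $\Phi\Psi=T^{4}$ are invertible on the generalized eigenspaces $\mathcal N((S-\lambda)^n)$ and $\mathcal N((T-\lambda)^n)$, so $\Phi$ and $\lambda^{-4}\Psi$ restrict to mutually inverse bijections between them; the same two maps transport $\mathcal N(S-\lambda)\cap\mathcal R((S-\lambda)^n)$ onto $\mathcal N(T-\lambda)\cap\mathcal R((T-\lambda)^n)$ and induce isomorphisms of the quotients $X/(\mathcal R(S-\lambda)+\mathcal N((S-\lambda)^n))$ with $Y/(\mathcal R(T-\lambda)+\mathcal N((T-\lambda)^n))$ (here one uses $S^{4}-\lambda^{4}\in\mathcal R(S-\lambda)$, $T^{4}-\lambda^{4}\in\mathcal R(T-\lambda)$, so the induced compositions are $\lambda^{4}\cdot\mathrm{id}$). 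By the formulas recalled in Section~2 this yields $c_n(T-\lambda)=c_n(S-\lambda)$, $c_n'(T-\lambda)=c_n'(S-\lambda)$ and $k_n(T-\lambda)=k_n(S-\lambda)$ for every $n$, hence the equality of $c,c',k$ and of $asc$, $dsc$, $asc_e$, $dsc_e$, $dis$, $dis_e$ for $S-\lambda$ and $T-\lambda$.

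The third step is to carry over the closedness conditions that occur in the definitions of the $R_i$. Each bijection $\Phi\colon\mathcal N((S-\lambda)^n)\to\mathcal N((T-\lambda)^n)$ is a continuous bijection of Banach spaces, hence a homeomorphism by the open mapping theorem, and similarly for the maps induced on the range-quotients; combining this with the intertwining relations one checks, for each $d$, that $\mathcal R((S-\lambda)^{d+1})$ is closed iff $\mathcal R((T-\lambda)^{d+1})$ is, and that $\mathcal R(S-\lambda)+\mathcal N((S-\lambda)^{d})$ is closed iff $\mathcal R(T-\lambda)+\mathcal N((T-\lambda)^{d})$ is. Feeding the numerical equalities of the previous step together with these closedness equivalences into the descriptions of $R_1,\dots,R_{19}$ from Section~2 — in particular the reformulations of $R_3,R_5,R_8,R_{10},R_{13},R_{15}$ via $dsc$, $dsc_e$, $asc$, $asc_e$, $dis$, $dis_e$, and of $R_{16}$–$R_{19}$ via $\mathcal R(T)+\mathcal N(T^{d})$ — gives $T-\lambda\in R_i\iff S-\lambda\in R_i$ for every $\lambda\neq0$ and every $1\leq i\leq19$, which is exactly $\sigma_{R_i}(AC)\setminus\{0\}=\sigma_{R_i}(BA)\setminus\{0\}$.

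I expect the genuine difficulty to sit in the last step. The equalities of $c_n,c_n',k_n$ fall out of $(\ast)$ in a purely algebraic way — this is the operator counterpart of the ring computations of \cite{CS} — but transferring the topological data is more delicate, because $\Phi$ has a nontrivial kernel (contained in the generalized $0$-eigenspace of $BA$) and $\Phi\Psi=(AC)^4$, $\Psi\Phi=(BA)^4$ are invertible only on the generalized $\lambda$-eigenspaces, not on the whole space nor on the ranges of powers. Making the closedness transfer rigorous therefore seems to require working through the injective operators that $S-\lambda$ and $T-\lambda$ induce on $X/\mathcal N((S-\lambda)^n)$ and $Y/\mathcal N((T-\lambda)^n)$, or invoking the stability theory of topological uniform descent, rather than just pushing subspaces forward along $\Phi$.
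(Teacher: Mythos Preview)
Your framework $(\ast)$ with $\Phi=ABA$, $\Psi=BACAC$, $\Phi\Psi=T^{4}$, $\Psi\Phi=S^{4}$ is correct and is a genuinely different packaging from the paper's. The paper never builds a two-sided pair like $(\Phi,\Psi)$; instead it uses only the one-way intertwiner $ACA$ from $BA$-data to $AC$-data, obtains the \emph{inequality} $c_n(BA-I)\le c_n(AC-I)$ (and the analogous ones for $c_n'$, $k_n$), and then closes the loop by passing through $CA$ and $AB$ via the known Jacobson-type identities $c_n(AC-I)=c_n(CA-I)$, $c_n(AB-I)=c_n(BA-I)$ from \cite{ZZ0}. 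Your route avoids that detour and yields the equalities of $c_n,c_n',k_n$ in one stroke; it also gives directly the equivalence of closedness of $\mathcal R(S-\lambda)+\mathcal N((S-\lambda)^d)$ and $\mathcal R(T-\lambda)+\mathcal N((T-\lambda)^d)$ by the sequential argument (this is the paper's Lemma~\ref{l1.6}, and your $(\ast)$ reproduces it verbatim). So for $R_1,R_2,R_4,R_6,R_7,R_9,R_{11},R_{12},R_{14}$ and $R_{16}$--$R_{19}$ your scheme is complete.

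The gap you yourself flag is real, and it is exactly the closedness of $\mathcal R((S-\lambda)^{d+1})$ for $d\ge 1$, needed for $R_3,R_5,R_8,R_{10},R_{13},R_{15}$. Pushing sequences along $\Phi$ and back along $\Psi$ only gives $S^{4}x\in\mathcal R((S-\lambda)^{d+1})$, and from $\lambda^{4}x=S^{4}x-(S-\lambda)q(S)x$ you can conclude $x\in\mathcal R(S-\lambda)$ but \emph{not} $x\in\mathcal R((S-\lambda)^{d+1})$; the open-mapping argument you sketch is circular because the quotients involved are Banach only after you know the ranges are closed. The paper resolves this with a device you are missing: Barnes' observation that $(I-BA)^n=I-B_nA$ and $(I-AC)^n=I-AC_n$ with $B_n=\sum_{k=1}^{n}(-1)^{k-1}\binom{n}{k}B(AB)^{k-1}$ and $C_n=\sum_{k=1}^{n}(-1)^{k-1}\binom{n}{k}(CA)^{k-1}C$, together with the (routine) verification that the new triple $(A,B_n,C_n)$ again satisfies $A(B_nA)^2=AB_nAC_nA=AC_nAB_nA=(AC_n)^2A$. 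This reduces the closedness of $\mathcal R((I-AC)^n)$ and $\mathcal R((I-BA)^n)$ to the $d=0$ case already handled, and is the content of the paper's Lemma~\ref{l1.7}. Adding this reduction to your argument would close the gap; the alternatives you suggest (quotienting by $\mathcal N((S-\lambda)^n)$, or topological-uniform-descent stability) do not obviously do the job here.
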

The proof of our main result uses several auxiliary lemmas.
\begin{lem}\label{l1.1} Let $A\in\mathcal{L}(X,Y)$ and $B,C\in\mathcal{L}(Y,X)$ such that $A(BA)^2=ABACA=ACABA=(AC)^2A.$ Let $Q$ be a polynomial. Then we have\\
\indent 1) $ABA\mathcal{R}(Q(CA-I))\subseteq\mathcal{R}(Q(AB-I))$;\\
\indent 2) $ABA\mathcal{N}(Q(CA-I)\subseteq\mathcal{N}(Q(AB-I))$;\\
\indent 3) $ACA\mathcal{R}(Q(BA-I))\subseteq\mathcal{R}(Q(AC-I))$;\\
\indent 4) $ACA\mathcal{N}(Q(BA-I))\subseteq\mathcal{N}(Q(AC-I))$.
\end{lem}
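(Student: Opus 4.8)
The plan is to reduce all four inclusions to a single intertwining identity in each of the two ``directions''. First I would observe that the hypothesis $A(BA)^2=ABACA$ can be rewritten as $(ABA)(CA)=(AB)(ABA)$, since $A(BA)^2=ABABA=(AB)(ABA)$. Subtracting $ABA$ from both sides gives the key relation
\[
(ABA)(CA-I)=(AB-I)(ABA).
\]
Symmetrically, the hypothesis $ACABA=(AC)^2A$ reads $(ACA)(BA)=(AC)(ACA)$, whence
\[
(ACA)(BA-I)=(AC-I)(ACA).
\]
Note that for this lemma only two of the three equalities in the hypothesis are needed; the remaining one, $ABACA=ACABA$, will enter later.

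Next I would propagate these two relations through powers and then through an arbitrary polynomial. By an immediate induction on $n$ one gets $(ABA)(CA-I)^n=(AB-I)^n(ABA)$ for every $n\geq 0$, the constant term being harmless because $ABA$ commutes with scalars in the appropriate sense, $(ABA)(\lambda I_X)=(\lambda I_Y)(ABA)$. Taking the linear combination over the monomials of $Q$ yields
\[
(ABA)\,Q(CA-I)=Q(AB-I)\,(ABA),
\]
and likewise $(ACA)\,Q(BA-I)=Q(AC-I)\,(ACA)$.

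Finally, the four assertions are read off these operator identities. For the range inclusions, $ABA\,\mathcal{R}(Q(CA-I))=\mathcal{R}\big((ABA)Q(CA-I)\big)=\mathcal{R}\big(Q(AB-I)(ABA)\big)\subseteq\mathcal{R}(Q(AB-I))$, which is~1); the same computation with $ACA$, $BA$, $AC$ in place of $ABA$, $CA$, $AB$ gives~3). For the kernel inclusions, if $x\in\mathcal{N}(Q(CA-I))$ then $Q(AB-I)(ABA\,x)=(ABA)Q(CA-I)x=0$, so $ABA\,x\in\mathcal{N}(Q(AB-I))$, giving~2); and~4) is obtained analogously.

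The only real point is to pair the four equal products of the hypothesis correctly, so that $ABA$ (respectively $ACA$) appears as an intertwiner between $CA-I$ and $AB-I$ (respectively between $BA-I$ and $AC-I$); after that the argument is just routine bookkeeping with ranges and kernels, and there is no genuine obstacle.
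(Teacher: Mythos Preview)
Your proof is correct and follows essentially the same route as the paper: both arguments hinge on the intertwining identities $(ABA)\,Q(CA-I)=Q(AB-I)\,(ABA)$ and $(ACA)\,Q(BA-I)=Q(AC-I)\,(ACA)$, obtained first for powers and then extended by linearity, after which the range and kernel inclusions are immediate. Your derivation is slightly more explicit in extracting the base case $(ABA)(CA-I)=(AB-I)(ABA)$ directly from $A(BA)^2=ABACA$, and your remark that only the two ``outer'' equalities of the hypothesis are used here is a correct and useful observation.
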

\begin{proof} It is easy to see that for each $k\in\mathbb{Z}_+$, 
\begin{equation}\label{eqQ1}ABA(CA-I)^k=(AB-I)^kABA\mbox{ and }ACA(BA-I)^k=(AC-I)^kACA.
\end{equation}
Then \begin{equation}\label{eqQ2}ABAQ(CA-I)=Q(AB-I)ABA\mbox{ and }ACAQ(BA-I)=Q(AC-I)ACA.
\end{equation}
1) Let $x$ belongs to $\mathcal{R}(Q(CA-I))$. Then there exists some $y\in X$ such that $Q(CA-I)y=x$. Hence it follows from (\ref{eqQ1}) that  $ABAx=ABAQ(CA-I)x=Q(AB-I)ABAx$ which belongs to $\mathcal{R}(Q(AB-I))$. Thus $ABA\mathcal{R}(Q(CA-I))\subseteq\mathcal{R}(Q(AB-I))$.
\smallskip

2) Let $x\in\mathcal{N}(Q(CA-I))$. Then $Q(CA-I)x=0$. It follows from (\ref{eqQ1}) that  $Q(AB-I)ABAx=ABAQ(CA-I)x=0$. Thus $ABAx\in\mathcal{N}(Q(AB-I))$.
\smallskip

Using (\ref{eqQ2}), 3) and 4) go similarly.
\end{proof}

\begin{lem}\label{l1.4} Let $A\in\mathcal{L}(X,Y)$ and $B,C\in\mathcal{L}(Y,X)$ such that $A(BA)^2=ABACA=ACABA=(AC)^2A.$
Then $$c_n(AC-I)=c_n(BA-I)\mbox{ for all }n\in\mathbb{Z}_+.$$
In particular, $c(AC-I)=c(BA-I).$
\end{lem}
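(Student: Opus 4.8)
The plan is to compute $c_n(AC-I)$ using the formula from \cite[Lemma 3.2]{Gr}, namely $c_n(T)=\dim X/(\mathcal{R}(T)+\mathcal{N}(T^n))$, and show the two quotient spaces are linearly isomorphic via maps built from $ABA$ and $ACA$. Concretely, I would first verify the intertwining identities of Lemma~\ref{l1.1} with the specific polynomials $Q(t)=t$ and $Q(t)=t^n$, so that $ACA$ maps $\mathcal{R}(BA-I)+\mathcal{N}((BA-I)^n)$ into $\mathcal{R}(AC-I)+\mathcal{N}((AC-I)^n)$, and likewise $ABA$ maps $\mathcal{R}(AC-I)+\mathcal{N}((AC-I)^n)$ into $\mathcal{R}(BA-I)+\mathcal{N}((BA-I)^n)$. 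Hence $ACA$ induces a well-defined linear map $\widetilde{ACA}: X/(\mathcal{R}(BA-I)+\mathcal{N}((BA-I)^n))\to X/(\mathcal{R}(AC-I)+\mathcal{N}((AC-I)^n))$ and $ABA$ induces $\widetilde{ABA}$ in the opposite direction.

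The next step is to show these induced maps are mutually inverse isomorphisms up to the subspaces involved, which is where the hypothesis $A(BA)^2=ABACA=ACABA=(AC)^2A$ does the real work. From these relations one extracts, for instance, $ACABA = A(BA)^2 = (BA)^2 A \cdot (\text{suitable grouping})$ — more usefully, $ACABA-ABA = (ABA)(BA-I) + (\text{correction})$, so that modulo $\mathcal{R}(BA-I)$ the composite $ABA\circ ACA$ acts on a vector $x$ as $ABACAx \equiv$ something congruent to $ABAx$, and then one pushes further to show $ABACAx - x \in \mathcal{R}(BA-I)+\mathcal{N}((BA-I)^n)$. Symmetrically $ACA\circ ABA$ is congruent to the identity on the other quotient. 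Therefore $\widetilde{ABA}\circ\widetilde{ACA}=\mathrm{id}$ and $\widetilde{ACA}\circ\widetilde{ABA}=\mathrm{id}$, forcing $\dim X/(\mathcal{R}(AC-I)+\mathcal{N}((AC-I)^n)) = \dim X/(\mathcal{R}(BA-I)+\mathcal{N}((BA-I)^n))$, i.e.\ $c_n(AC-I)=c_n(BA-I)$. Summing over $n$ gives $c(AC-I)=c(BA-I)$.

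The main obstacle I anticipate is the bookkeeping in the second step: verifying that the composites $ABA\circ ACA$ and $ACA\circ ABA$ really do reduce to the identity modulo the relevant subspaces. The four-term hypothesis is not symmetric in an obvious way, so one must carefully chase identities like $ABACA = A(BA)^2$ and $(AC)^2A = ACABA$; the key algebraic lemma one wants is that $ABACA - ABA \in \mathcal{R}(ABA(BA-I))$ or similar, together with the fact that $\mathcal{N}((BA-I)^n)$ is $ABA$-invariant in the appropriate quotient sense (which itself needs part 2) of Lemma~\ref{l1.1}). A secondary subtlety is that these are purely algebraic (dimension-theoretic) statements — no topology is needed here since $c_n$ is defined via a quotient dimension — so one should take care not to invoke closedness of ranges, which only enters later when handling the full regularities $R_i$ rather than $c_n$ alone.
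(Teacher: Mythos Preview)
There is a genuine gap in your reverse map. Recall that $A\colon X\to Y$ and $B,C\colon Y\to X$, so both $ABA$ and $ACA$ go from $X$ to $Y$; neither furnishes a map from the $(AC-I)$--quotient (which lives in $Y$) back to the $(BA-I)$--quotient (in $X$). Moreover, Lemma~\ref{l1.1} only says that $ABA$ intertwines $CA-I$ with $AB-I$, not $AC-I$ with $BA-I$. So the pair $(\widetilde{ACA},\widetilde{ABA})$ cannot be mutually inverse as you describe, and the vague ``$ABACAx\equiv ABAx$'' reduction does not lead anywhere: the composite $ABA\cdot ACA$ is not even defined.

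The paper avoids this by proving only an inequality and then closing a four-term chain. Working with the original definition $c_n(T)=\dim\mathcal{R}(T^n)/\mathcal{R}(T^{n+1})$, it shows that the map $\Gamma_{ACA}\colon \mathcal{R}((BA-I)^n)/\mathcal{R}((BA-I)^{n+1})\to \mathcal{R}((AC-I)^n)/\mathcal{R}((AC-I)^{n+1})$ is \emph{injective}: if $x\in\mathcal{R}((BA-I)^n)$ and $ACAx\in\mathcal{R}((AC-I)^{n+1})$, then left-multiplying by $C$, applying Lemma~\ref{l1.1}\,(1), and left-multiplying by $B$ gives $(BA)^4x=BABACACAx\in\mathcal{R}((BA-I)^{n+1})$; since $(BA)^4-I=((BA)^3+(BA)^2+BA+I)(BA-I)$ and $x\in\mathcal{R}((BA-I)^n)$, one gets $x\in\mathcal{R}((BA-I)^{n+1})$. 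This yields $c_n(BA-I)\le c_n(AC-I)$, and by the symmetric argument $c_n(CA-I)\le c_n(AB-I)$. The loop is then closed using the classical Jacobson identities $c_n(AC-I)=c_n(CA-I)$ and $c_n(AB-I)=c_n(BA-I)$ from \cite[Lemma~3.9]{ZZ0}. The point you were missing is that the ``return trip'' from $Y$ to $X$ is not $ABA$ but the composite $B\,(ABA)\,C$, and the hypothesis is exactly what makes $B(ABA)C(ACA)=(BA)^4$.
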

\begin{proof} Let $$\Gamma_{ACA}\,:\,\mathcal{R}((BA-I)^{n})/\mathcal{R}((BA-I)^{n+1})\rightarrow \mathcal{R}((AC-I)^{n})/\mathcal{R}((AC-I)^{n+1})$$ be the linear application defined by $$\Gamma_{ACA}(x+\mathcal{R}((BA-I)^{n+1}))=ACAx+\mathcal{R}((AC-I)^{n+1}).$$ Since $ACA\mathcal{R}((BA-I)^n)\subseteq\mathcal{R}((AC-I)^n)$ by Lemma \ref{l1.1}, part 3), then $\Gamma_{ACA}$ is well defined. We shall show that $\Gamma_{ACA}$ is injective.
\smallskip

Let $x\in\mathcal{R}((BA-I)^{n})$ such that $\Gamma_{ACA}(x)=0$. Then $ACAx\in\mathcal{R}((AC-I)^{n+1})$. Hence $CACAx\in \mathcal{R}((CA-I)^{n+1})$. From Lemma \ref{l1.1}, part 1), we have $ABACACAx\in \mathcal{R}((AB-I)^{n+1}).$ Then 
 $$(BA)^4x=BABACACAx\in \mathcal{R}((BA-I)^{n+1}).$$ 
Since $x\in\mathcal{R}((BA-I)^{n})$ then $x=(BA-I)^nz$ for some $z\in X$.
Hence
\[\begin{array}{lcl}x&=&(BA)^4x-((BA)^4-I)x\\
&=&(BA)^4x-((BA)^3+(BA)^2+(BA)+I)(BA-I)x\\
&=&(BA)^4x-((BA)^3+(BA)^2+(BA)+I)(BA-I)^{n+1}z\\
&=&(BA)^4x-(BA-I)^{n+1}(((BA)^3+(BA)^2+(BA)+I)z)\in\mathcal{R}((BA-I)^{n+1}).
\end{array}
\]
Thus $\Gamma_{ACA}$ is injective and consequently
\begin{equation}\label{eq1c} c_n(BA-I)\leq c_n(AC-I).
\end{equation}
In similar way, we show that  
\begin{equation}\label{eq2c} c_n(CA-I)\leq c_n(AB-I).
\end{equation}
Finally,
\[\begin{array}{lcl}c_n(BA-I)&\leq&c_n(AC-I)\\
&=&c_n(CA-I) \mbox{ (\cite[Lemma 3.9]{ZZ0}}\\
&\leq&c_n(AB-I) \mbox{ by (\ref{eq2c})}\\
&=&c_n(BA-I) \mbox{ (\cite[Lemma 3.9]{ZZ0}}.
\end{array}
\]
Therefore $c_n(BA-I)=c_n(AC-I)$ for all $n\in\mathbb{Z}_+$. In particular, $c(AC-I)=c(BA-I).$
\end{proof}

For $T\in\mathcal{L}(X)$, let $\sigma_{asc}(T)$ and $\sigma_{asc}^e(T)$ be, respectively, the {\it ascent spectrum} and the {\it  descent spectrum} of $T$ defined by $$\sigma_{acs}(T)=\{\lambda\in\mathbb{C}\,:\,asc(T)=\infty\}\mbox{ and }\sigma_{acs}^e(T)=\{\lambda\in\mathbb{C}\,:\,asc(T).$$
The following is an immediate consequence of Lemma \ref{l1.4}.
\begin{cor}\label{cor_asc} Let $A\in\mathcal{L}(X,Y)$ and $B,C\in\mathcal{L}(Y,X)$ such that $A(BA)^2=ABACA=ACABA=(AC)^2A.$ Then
$$\sigma_*{AC}\setminus\{0\}=\sigma_*{BA}\setminus\{0\},\mbox{ for }\sigma_*\in\{\sigma_{asc},\sigma_{acs}^e\}.$$
\end{cor}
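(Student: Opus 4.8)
The plan is to deduce the corollary, by a one-line rescaling, from the equalities $c_n(AC-I)=c_n(BA-I)$ of Lemma~\ref{l1.4} together with their counterparts $c_n'(AC-I)=c_n'(BA-I)$ --- the analogue of Lemma~\ref{l1.4} for the sequence $(c_n')$, which is proved by the same argument via Lemma~\ref{l1.1} --- and then to note that each of $asc$, $asc_e$, $dsc$, $dsc_e$ is recovered from a single one of these two sequences through the $\inf$-formulas recalled in Section~2.

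First I would observe that the hypothesis $A(BA)^2=ABACA=ACABA=(AC)^2A$ is invariant under the substitution $(A,B,C)\mapsto(A,\lambda^{-1}B,\lambda^{-1}C)$ for any $\lambda\in\mathbb{C}\setminus\{0\}$: each of the four words contains exactly two letters from $\{B,C\}$, so the substitution multiplies every word by $\lambda^{-2}$ and the four identities persist. Hence Lemma~\ref{l1.4} (and its $(c_n')$-counterpart) applies to $(A,\lambda^{-1}B,\lambda^{-1}C)$ and gives $c_n(\lambda^{-1}BA-I)=c_n(\lambda^{-1}AC-I)$ and $c_n'(\lambda^{-1}BA-I)=c_n'(\lambda^{-1}AC-I)$ for all $n\in\mathbb{Z}_+$. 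Since $\lambda^{-1}BA-I=\lambda^{-1}(BA-\lambda I)$ and multiplication by a nonzero scalar changes no subspace $\mathcal{R}(S^n)$ or $\mathcal{N}(S^n)$, one has $c_n(\lambda^{-1}(BA-\lambda I))=c_n(BA-\lambda I)$ and $c_n'(\lambda^{-1}(BA-\lambda I))=c_n'(BA-\lambda I)$, and likewise for $AC$; therefore
$$c_n(AC-\lambda I)=c_n(BA-\lambda I)\quad\mbox{and}\quad c_n'(AC-\lambda I)=c_n'(BA-\lambda I)$$
for all $n\in\mathbb{Z}_+$ and all $\lambda\neq 0$. Feeding these into $dsc(T)=\inf\{n:c_n(T)=0\}$, $dsc_e(T)=\inf\{n:c_n(T)<\infty\}$, $asc(T)=\inf\{n:c_n'(T)=0\}$ and $asc_e(T)=\inf\{n:c_n'(T)<\infty\}$ shows that the four invariants of $AC-\lambda I$ and $BA-\lambda I$ agree for every $\lambda\neq 0$ (one side finite exactly when the other is), which is precisely $\sigma_*(AC)\setminus\{0\}=\sigma_*(BA)\setminus\{0\}$ for $\sigma_*$ any of the ascent, essential-ascent, descent and essential-descent spectra.

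I do not anticipate a genuine obstacle here; once Lemma~\ref{l1.4} (and its $c_n'$-counterpart) is available the corollary is bookkeeping. The only two points that deserve a sentence of justification are the scaling-invariance of the hypothesis, which is what permits the lemma to be invoked at every $\lambda\neq 0$ rather than just at $\lambda=1$, and the elementary observation that $c_n$ and $c_n'$ are insensitive to multiplication of the operator by a nonzero scalar, which is what lets one pass from $\lambda^{-1}BA-I$ back to $BA-\lambda I$.
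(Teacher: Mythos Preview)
Your argument is correct and matches the paper's approach: the paper simply declares the corollary an ``immediate consequence'' of the lemma, and the rescaling $(B,C)\mapsto(\lambda^{-1}B,\lambda^{-1}C)$ you spell out is exactly the step that justifies passing from $\lambda=1$ to arbitrary $\lambda\neq 0$. Two small remarks: the ascent and essential-ascent spectra are controlled by the sequence $(c_n')$, not $(c_n)$, so the relevant input is indeed the $c_n'$-analogue you invoke (this is the paper's Lemma~\ref{l1.3}, stated just after the corollary; the paper's attribution of the ascent corollary to Lemma~\ref{l1.4} and of the descent corollary to Lemma~\ref{l1.3} appears to be a swap); and your check that each of the four words $ABABA$, $ABACA$, $ACABA$, $ACACA$ contains exactly two letters from $\{B,C\}$ is precisely what makes the hypothesis scaling-invariant.
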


\begin{lem}\label{l1.3} Let $A\in\mathcal{L}(X,Y)$ and $B,C\in\mathcal{L}(Y,X)$ such that $A(BA)^2=ABACA=ACABA=(AC)^2A.$ 
Then $$c_n'(AC-I)=c_n'(BA-I)\mbox{ for all }n\in\mathbb{Z}_+.$$
In particular, $c'(AC-I)=c'(BA-I).$
\end{lem}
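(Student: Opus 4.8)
The plan is to run the kernel analogue of the proof of Lemma \ref{l1.4}; recall that $c_n'(T)=\dim\mathcal{N}(T^{n+1})/\mathcal{N}(T^{n})$. First I would introduce the linear map
$$\Gamma'_{ACA}\,:\,\mathcal{N}((BA-I)^{n+1})/\mathcal{N}((BA-I)^{n})\longrightarrow\mathcal{N}((AC-I)^{n+1})/\mathcal{N}((AC-I)^{n})$$
defined by $\Gamma'_{ACA}(x+\mathcal{N}((BA-I)^{n}))=ACAx+\mathcal{N}((AC-I)^{n})$. Applying Lemma \ref{l1.1}, part 4), with $Q(t)=t^{n+1}$ and then with $Q(t)=t^{n}$ gives $ACA\,\mathcal{N}((BA-I)^{n+1})\subseteq\mathcal{N}((AC-I)^{n+1})$ and $ACA\,\mathcal{N}((BA-I)^{n})\subseteq\mathcal{N}((AC-I)^{n})$, so $\Gamma'_{ACA}$ is well defined. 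The crux is to show that $\Gamma'_{ACA}$ is injective, which will give $c_n'(BA-I)\leq c_n'(AC-I)$.

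For injectivity, suppose $x\in\mathcal{N}((BA-I)^{n+1})$ with $ACAx\in\mathcal{N}((AC-I)^{n})$, that is, $(AC-I)^{n}ACAx=0$. By the intertwining relation (\ref{eqQ1}), $(AC-I)^{n}ACA=ACA(BA-I)^{n}$, so, putting $w=(BA-I)^{n}x$, we obtain $ACAw=0$; moreover $(BA-I)w=(BA-I)^{n+1}x=0$, hence $BAw=w$ and therefore $(BA)^{k}w=w$ for all $k\geq0$. Using the hypothesis in the form $ABABA=ACACA$ one has the operator identity $(BA)^{4}=B(ABABA)BA=B(ACACA)BA=BACACABA$, whence
$$w=(BA)^{4}w=BACACABA\,w=BACACA\,w=0,$$
where the third equality uses $BAw=w$ and the last uses $ACAw=0$ (since $BACACA\,w=BAC(ACAw)$). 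Thus $x\in\mathcal{N}((BA-I)^{n})$, so $\Gamma'_{ACA}$ is injective.

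Next I would run the symmetric construction with $ABA$ in place of $ACA$, namely the analogous map $\Gamma'_{ABA}$ sending $x+\mathcal{N}((CA-I)^{n})$ to $ABAx+\mathcal{N}((AB-I)^{n})$, well defined on $\mathcal{N}((CA-I)^{n+1})/\mathcal{N}((CA-I)^{n})$ by Lemma \ref{l1.1}, part 2). Its injectivity follows the same way: for $x\in\mathcal{N}((CA-I)^{n+1})$ with $ABAx\in\mathcal{N}((AB-I)^{n})$, relation (\ref{eqQ1}) gives $ABA(CA-I)^{n}x=0$, so with $w=(CA-I)^{n}x$ one has $ABAw=0$ and $CAw=w$, and since $(CA)^{4}=C(ACACA)CA=C(ABABA)CA=CABABACA$ one gets $w=(CA)^{4}w=CABABACA\,w=CABABA\,w=0$. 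Hence $c_n'(CA-I)\leq c_n'(AB-I)$. Combining the two inequalities with the Jacobson-type equalities $c_n'(AC-I)=c_n'(CA-I)$ and $c_n'(AB-I)=c_n'(BA-I)$ (\cite[Lemma 3.9]{ZZ0}) yields the chain
$$c_n'(BA-I)\leq c_n'(AC-I)=c_n'(CA-I)\leq c_n'(AB-I)=c_n'(BA-I),$$
forcing equality throughout, i.e.\ $c_n'(AC-I)=c_n'(BA-I)$ for every $n$; summing over $n$ then gives $c'(AC-I)=c'(BA-I)$. The only real obstacle is the injectivity step, and within it the observation that the fourth power of $BA$ (resp.\ $CA$) factors, via $A(BA)^{2}=(AC)^{2}A$, so that an $ACA$ (resp.\ $ABA$) lands next to $w$ after absorbing one $BA$ (resp.\ $CA$) through $BAw=w$; once this is spotted, the rest is routine and parallels Lemma \ref{l1.4}.
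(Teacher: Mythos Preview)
Your proof is correct and follows the same strategy as the paper's: define the induced map on kernel quotients, establish injectivity via the identity $(BA)^4=BACACABA$ coming from $A(BA)^2=(AC)^2A$, run the symmetric version with $ABA$, and close the chain using the Jacobson-type equalities from \cite{ZZ0} (the relevant one for $c_n'$ is Lemma~3.10 there, not 3.9). Your injectivity step---setting $w=(BA-I)^n x$ and showing $w=0$ directly---is a touch slicker than the paper's, which instead shows $(BA)^4x\in\mathcal{N}((BA-I)^n)$ and then subtracts $((BA)^4-I)x$, but the underlying mechanism is identical.
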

\begin{proof} Let $$\Psi_{ACA}\,:\,\mathcal{N}((BA-I)^{n+1})/\mathcal{N}((BA-I)^{n})\rightarrow \mathcal{N}((AC-I)^{n+1})/\mathcal{N}((AC-I)^{n})$$ be the linear application defined by $$\Psi_{ACA}(x+\mathcal{N}((BA-I)^{n}))=ACAx+\mathcal{N}((AC-I)^{n}).$$ Since $ACA\mathcal{N}((BA-I)^{n+1})\subseteq\mathcal{N}((AC-I)^{n+1})$ by Lemma \ref{l1.1}, part 4), then $\Psi_{ACA}$ is well defined. 
\smallskip

Now we show that $\Psi_{ACA}$ is injective. Let $x\in\mathcal{N}((BA-I)^{n+1})$ such that $\Psi_{ACA}(x)=0$, which means that $ACAx\in\mathcal{N}((AC-I)^{n})$. Hence $CACAx\in \mathcal{N}((CA-I)^n)$. It follows from Lemma \ref{l1.1}, part ii), that $ABACACAx\in \mathcal{N}((AB-I)^n).$ Then $$(BA)^4x=BABACACAx\in \mathcal{N}((BA-I)^n).$$  Hence
\[\begin{array}{lcl}x&=&(BA)^4x-((BA)^4-I)x\\
&=&(BA)^4x-[(BA)^3+(BA)^2+(BA)+I](BA-I)x\in\mathcal{N}((BA-I)^n).
\end{array}
\]
Which implies that $\Psi_{ACA}$ is injective and then 
\begin{equation}\label{eq1c'} c_n'(BA-I)\leq c_n'(AC-I).
\end{equation}
Similarly,  we prove that
\begin{equation}\label{eq2c'} c_n'(CA-I)\leq c_n'(AB-I).
\end{equation}
Finally,
\[
\begin{array}{lcl}c_n'(BA-I)&\leq&c_n'(AC-I)\\
&=&c_n'(CA-I) \mbox{ (\cite[Lemma 3.10]{ZZ0}}\\
&\leq&c_n'(AB-I) \mbox{ by (\ref{eq2c'})}\\
&=&c_n'(BA-I) \mbox{ (\cite[Lemma 3.10]{ZZ0}} ;
\end{array}
\]
Therefore $c_n'(BA-I)=c_n'(AC-I)$ for all $n\in\mathbb{Z}_+$. In particular, $c'(AC-I)=c'(BA-I).$
\end{proof}

For $T\in\mathcal{L}(X)$ let $\sigma_{dsc}(T)$ and $\sigma_{dsc}^e(T)$ be respectively the {\it descent spectrum} and the {\it  essential descent spectrum} of $T$ defined by $$\sigma_{dcs}(T)=\{\lambda\in\mathbb{C}\,:\,dsc(T)=\infty\}\mbox{ and }\sigma_{dcs}^e(T)=\{\lambda\in\mathbb{C}\,:\,dsc(T).$$
Then the following is an  immediate consequence of Lemma \ref{l1.3} 
\begin{cor}\label{cor_asc} Let $A\in\mathcal{L}(X,Y)$ and $B,C\in\mathcal{L}(Y,X)$ such that $A(BA)^2=ABACA=ACABA=(AC)^2A.$ Then
$$\sigma_*{AC}\setminus\{0\}=\sigma_*{BA}\setminus\{0\},\mbox{ for }\sigma_*\in\{\sigma_{dsc},\sigma_{dcs}^e\}.$$
\end{cor}

\begin{lem}\label{l1.5} Let $A\in\mathcal{L}(X,Y)$ and $B,C\in\mathcal{L}(Y,X)$ such that $A(BA)^2=ABACA=ACABA=(AC)^2A.$
Then $$k_n(AC-I)=k_n(BA-I)\mbox{ for all }n\in\mathbb{Z}_+.$$
In particular, $k(AC-I)=k(BA-I).$
\end{lem}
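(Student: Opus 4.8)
The plan is to mimic the structure already used in the proofs of Lemmas \ref{l1.4} and \ref{l1.3}, since $k_n$ admits a description in terms of an induced linear map $\Gamma_n$ on the quotients $\mathcal{R}(T^n)/\mathcal{R}(T^{n+1})$, exactly the kind of object on which Lemma \ref{l1.1} acts. Recall from \cite[Theorem 3.7]{Gr} that $k_n(T)=\dim\bigl((\mathcal{R}(T^n)\cap\mathcal{N}(T))/(\mathcal{R}(T^{n+1})\cap\mathcal{N}(T))\bigr)$. So I would set up, for fixed $n$, the map
\[
\Lambda_{ACA}\,:\,\bigl(\mathcal{R}((BA-I)^n)\cap\mathcal{N}(BA-I)\bigr)\big/\bigl(\mathcal{R}((BA-I)^{n+1})\cap\mathcal{N}(BA-I)\bigr)\longrightarrow \bigl(\mathcal{R}((AC-I)^n)\cap\mathcal{N}(AC-I)\bigr)\big/\bigl(\mathcal{R}((AC-I)^{n+1})\cap\mathcal{N}(AC-I)\bigr),
\]
sending the class of $x$ to the class of $ACAx$. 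Well-definedness is immediate from Lemma \ref{l1.1}: part 3) with $Q(t)=t^n$ gives $ACA\,\mathcal{R}((BA-I)^n)\subseteq\mathcal{R}((AC-I)^n)$, part 4) with $Q(t)=t$ gives $ACA\,\mathcal{N}(BA-I)\subseteq\mathcal{N}(AC-I)$, and similarly with $n+1$ for the denominator, so $\Lambda_{ACA}$ carries numerator into numerator and denominator into denominator.

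The heart of the argument is injectivity of $\Lambda_{ACA}$, and here I would copy the "$(BA)^4$ trick" verbatim. Suppose $x\in\mathcal{R}((BA-I)^n)\cap\mathcal{N}(BA-I)$ with $\Lambda_{ACA}(x)=0$, i.e. $ACAx\in\mathcal{R}((AC-I)^{n+1})\cap\mathcal{N}(AC-I)$. Applying $C$ on the left and using $C\,\mathcal{R}((AC-I)^{n+1})\subseteq\mathcal{R}((CA-I)^{n+1})$ and $C\,\mathcal{N}(AC-I)\subseteq\mathcal{N}(CA-I)$ (the standard intertwining $C(AC-I)=(CA-I)C$), then applying $ABA$ on the left and using Lemma \ref{l1.1} parts 1) and 2) with $Q(t)=t^{n+1}$ and $Q(t)=t$ respectively, we get
\[
(BA)^4x=BABACACAx\in\mathcal{R}((BA-I)^{n+1})\cap\mathcal{N}(BA-I).
\]
Since $x\in\mathcal{N}(BA-I)$ we have $(BA)x=x$, hence $(BA)^4x=x$, which shows directly that $x\in\mathcal{R}((BA-I)^{n+1})\cap\mathcal{N}(BA-I)$, i.e. the class of $x$ is zero. (Alternatively, as in the lemmas above, write $x=(BA)^4x-[(BA)^3+(BA)^2+(BA)+I](BA-I)x$ and note the second term vanishes because $x\in\mathcal{N}(BA-I)$; this avoids even invoking $(BA)x=x$ explicitly.) Thus $\Lambda_{ACA}$ is injective, giving $k_n(BA-I)\le k_n(AC-I)$.

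Then I would run the same construction with the roles of $A$-on-one-side shifted: using Lemma \ref{l1.1} parts 1) and 2), the analogous map $\Lambda_{ABA}$ shows $k_n(CA-I)\le k_n(AB-I)$. Finally I would close the loop exactly as in Lemmas \ref{l1.4} and \ref{l1.3}, invoking the classical Jacobson-type identities $k_n(AC-I)=k_n(CA-I)$ and $k_n(AB-I)=k_n(BA-I)$ (the relevant "$k_n$ version" of \cite[Lemma 3.9 or 3.10]{ZZ0}, or equivalently the fact that $AC-I$ and $CA-I$ have the same $k_n$ since these quantities are determined by the common spectral picture of $\lambda=1$; if the exact reference is to \cite[Lemma 3.9]{ZZ0} for $c_n$ one needs its $k_n$ analogue, which I would cite from \cite{ZZ0} or \cite{Gr}):
\[
k_n(BA-I)\le k_n(AC-I)=k_n(CA-I)\le k_n(AB-I)=k_n(BA-I),
\]
so all four are equal; summing over $n$ gives $k(AC-I)=k(BA-I)$. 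The main obstacle I anticipate is purely bookkeeping: making sure that each application of Lemma \ref{l1.1} is used with the correct polynomial (degree $n$ for the range filtration in the numerator, degree $n+1$ for the denominator, degree $1$ for the kernel factor) and that the intertwining relations $C(AC-I)=(CA-I)C$ and $B(AB-I)=(BA-I)B$ are invoked cleanly so that pushing $ACAx$ through $C$, then $ABA$, then $B$ really does land $(BA)^4x$ in the intersection of the right range and kernel; no genuinely new idea beyond the three preceding lemmas should be needed.
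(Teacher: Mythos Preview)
Your proof is correct and follows essentially the same approach as the paper: define the induced map $x\mapsto ACAx$ on the relevant quotients, verify well-definedness via Lemma \ref{l1.1}, establish injectivity through the $(BA)^4$ identity, then close the loop via \cite{ZZ0}. The only cosmetic difference is that you work with the intersection description $k_n(T)=\dim\bigl((\mathcal{R}(T^n)\cap\mathcal{N}(T))/(\mathcal{R}(T^{n+1})\cap\mathcal{N}(T))\bigr)$ from \cite[Theorem 3.7]{Gr}, whereas the paper uses the dual sum description $(\mathcal{R}(T)+\mathcal{N}(T^{n+1}))/(\mathcal{R}(T)+\mathcal{N}(T^{n}))$ (your choice in fact makes the injectivity step slightly cleaner, since $x\in\mathcal{N}(BA-I)$ gives $(BA)^4x=x$ directly); the precise reference you were reaching for in the last step is \cite[Lemma 3.8]{ZZ0}.
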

\begin{proof} Let $\Phi_{ACA}$ be the linear application from $\mathcal{R}(BA-I)+\mathcal{N}((BA-I)^{n+1})/\mathcal{R}(BA-I)+\mathcal{N}((BA-I)^{n})$ to $ \mathcal{R}(AC-I)+\mathcal{N}((AC-I)^{n+1})/\mathcal{R}(AC-I)+\mathcal{N}((AC-I)^{n})$  defined by $$\Phi_{ACA}(x+\mathcal{R}(BA-I)+\mathcal{N}((BA-I)^{n}))=ACAx+\mathcal{R}(BA-I)+\mathcal{N}((AC-I)^{n}).$$ Since, by Lemme \ref{l1.1}, parts 3) and 4), 
$$ACA(\mathcal{R}(BA-I))+\mathcal{N}((BA-I)^{n+1})\subseteq\mathcal{R}(BA-I))+\mathcal{N}((BA-I)^{n+1}),$$
then $\Phi_{ACA}$ is well defined. 
\smallskip

We prove that $\Phi_{ACA}$ is injective. Let $x\in \mathcal{R}(BA-I)+\mathcal{N}((BA-I)^{n+1})$ such that $\Phi_{ACA}(x)=0$. Then $ACAx\in \mathcal{R}(AC-I)+\mathcal{N}((AC-I)^{n})$. So, there exist some $y\in \mathcal{R}(BA-I)$ and $z\in \mathcal{N}((AC-I)^{n})$ such that $ACAx=y+z$. Then $CACAx=Cy+Cz\in \mathcal{R}(CA-I)+\mathcal{N}((CA-I)^{n})$. Thus by Lemma \ref{l1.1}, parts 1) and 2), we get that $ABACACAx\in \mathcal{R}(AB-I)+\mathcal{N}((AB-I)^{n})$ and consequently $(BA)^4x=BABACACAx\in\mathcal{R}(BA-I)+\mathcal{N}((BA-I)^{n})$. Thus 
\[\begin{array}{lcl}x&=&(BA)^4x-((BA)^4-I)x\\
&=&(BA)^4x-(BA-I)((BA)^3+(BA)^2+(BA)+I)x\in\mathcal{R}(BA-I)+\mathcal{N}((BA-I)^n).
\end{array}
\]
Hence $\Phi_{ACA}$ is injective. Thus 
\begin{equation}\label{eq1k} k_n(BA-I)\leq k_n(AC-I).
\end{equation}
In similar way, we show that  
\begin{equation}\label{eq2k} k_n(CA-I)\leq k_n(AB-I).
\end{equation}
Therefore,
\[\begin{array}{lcl}k_n(BA-I)&\leq&k_n(AC-I)\\
&=&k_n(CA-I) \mbox{ (\cite[Lemma 3.8]{ZZ0}}\\
&\leq&k_n(AB-I) \mbox{ by (\ref{eq2k})}\\
&=&k_n(BA-I) \mbox{ (\cite[Lemma 3.8]{ZZ0}}.
\end{array}
\]

\end{proof}
\begin{lem}\label{l1.6} Let $A\in\mathcal{L}(X,Y)$ and $B,C\in\mathcal{L}(Y,X)$ such that $A(BA)^2=ABACA=ACABA=(AC)^2A.$
Then for all $n\in\mathbb{Z}_+$, $\mathcal{R}((AC-I)+\mathcal{N}((AC-I)^{n})$ is closed if and only if $\mathcal{R}(BA-I)+\mathcal{N}((BA-I)^{n})$ is closed.

In particular $\mathcal{R}(AC-I)$ is closed if and only if $\mathcal{R}(BA-I)$ is closed.
\end{lem}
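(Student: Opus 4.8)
The plan is to fix $n\in\mathbb{Z}_+$, set
$M_X=\mathcal{R}(BA-I)+\mathcal{N}((BA-I)^n)$ and $M_Y=\mathcal{R}(AC-I)+\mathcal{N}((AC-I)^n)$,
and prove each of the two implications by exhibiting bounded operators that carry $M_X$ and $M_Y$ into one another and whose composite is, modulo a range, the identity.

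First I would collect the intertwining data. By Lemma~\ref{l1.1} (taken with $Q(t)=t$ and with $Q(t)=t^n$), the operator $ACA\in\mathcal{L}(X,Y)$ satisfies $ACA(M_X)\subseteq M_Y$, and $ABA$ satisfies $ABA(M_{CA})\subseteq M_{AB}$, where $M_{CA}=\mathcal{R}(CA-I)+\mathcal{N}((CA-I)^n)$ and $M_{AB}=\mathcal{R}(AB-I)+\mathcal{N}((AB-I)^n)$. Combining this with the elementary relations $C(AC-I)=(CA-I)C$ and $B(AB-I)=(BA-I)B$, which give $C(M_Y)\subseteq M_{CA}$ and $B(M_{AB})\subseteq M_X$, one sees that the bounded operator $\mu\in\mathcal{L}(Y,X)$ given by $\mu y=BABACy$ (that is, $\mu=B\,ABA\,C$) satisfies $\mu(M_Y)\subseteq M_X$.

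The algebraic heart of the proof is to check, from $A(BA)^2=ABACA=ACABA=(AC)^2A$, that
\[
ACA\cdot BABAC=ACABABAC=(AC)^4,\qquad BABAC\cdot ACA=BABACACA=(BA)^4 .
\]
For the first, two applications of $ACABA=(AC)^2A$ collapse $ACABABAC$ to $ACACACAC=(AC)^4$; for the second, two applications of $ABACA=A(BA)^2$ collapse $ABACACA$ to $ABABABA=A(BA)^3$, whence $BABACACA=(BA)^4$. Thus, writing $\alpha:=ACA$, we have $\alpha(M_X)\subseteq M_Y$, $\mu(M_Y)\subseteq M_X$, $\alpha\mu=(AC)^4$ and $\mu\alpha=(BA)^4$; moreover $(AC)^4-I=(AC-I)\bigl((AC)^3+(AC)^2+AC+I\bigr)$, so the range of $(AC)^4-I$ is contained in $\mathcal{R}(AC-I)\subseteq M_Y$, and symmetrically $((BA)^4-I)X\subseteq\mathcal{R}(BA-I)\subseteq M_X$.

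The topological step is then immediate. Suppose $M_X$ is closed and let $y$ belong to the closure of $M_Y$, say $y_k\to y$ with $y_k\in M_Y$. Since $\mu$ is bounded, $\mu y_k\to\mu y$ and $\mu y_k\in M_X$, so $\mu y\in M_X$; hence $(AC)^4y=\alpha(\mu y)\in\alpha(M_X)\subseteq M_Y$. As $y-(AC)^4y=-\bigl((AC)^4-I\bigr)y\in\mathcal{R}(AC-I)\subseteq M_Y$, we get $y=(AC)^4y+\bigl(y-(AC)^4y\bigr)\in M_Y$, so $M_Y$ is closed. The reverse implication follows by the same reasoning with the roles of $\alpha$ and $\mu$ (and of $X$ and $Y$) interchanged, using $\mu\alpha=(BA)^4$; and the case $n=0$ is precisely the final assertion about $\mathcal{R}(AC-I)$ and $\mathcal{R}(BA-I)$. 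I expect the only point needing care to be the algebraic collapse $ACA\cdot BABAC=(AC)^4$ (and its mirror image): the whole argument rests on this ``invertibility of the composite modulo $\mathcal{R}(AC-I)$'', so the cancellations forced by the fourfold identity $A(BA)^2=ABACA=ACABA=(AC)^2A$ should be written out in full.
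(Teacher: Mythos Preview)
Your proof is correct and follows essentially the same route as the paper: map $M_X$ into $M_Y$ via $ACA$, map back via $B\cdot ABA\cdot C$, and use that the composite on either side equals $(BA)^4$ (resp.\ $(AC)^4$), so that the identity $T^4-I=(T-I)(T^3+T^2+T+I)$ puts the difference into $\mathcal{R}(T-I)$. The paper writes out the implication ``$M_Y$ closed $\Rightarrow$ $M_X$ closed'' with a sequence argument and says the converse is similar; you package both directions at once by naming $\alpha=ACA$, $\mu=BABAC$ and computing $\alpha\mu=(AC)^4$, $\mu\alpha=(BA)^4$, but the operators and the key algebraic collapse are identical.
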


\begin{proof} Assume that $\mathcal{R}(AC-I)+\mathcal{N}((AC-I)^{n})$ is closed. Let $\{x_p\}$ be a sequence in 
$\mathcal{R}(BA-I)+\mathcal{N}((BA-I)^{n})$ which converges to $x\in X$. Then $ACAx_p$ converge to $ACAx$. Since $ACA(\mathcal{R}(BA-I)+\mathcal{N}((BA-I)^{n}))\subset\mathcal{R}(AC-I)+\mathcal{N}((AC-I)^{n})$ by Lemma \ref{l1.1}, part 3) and 4), then $ACAx_p$ belongs to $\mathcal{R}((AC-I)+\mathcal{N}((AC-I)^{n})$. Since $\mathcal{R}(AC-I)+\mathcal{N}((AC-I)^{n})$ is closed and $ACAx_p$ converges to $ACAx$. 
\[\begin{array}{lcl}
\quad &\Longrightarrow&ACAx\in \mathcal{R}(AC-I)+\mathcal{N}((AC-I)^{n})\\
&\Longrightarrow&CACAx\in \mathcal{R}(CA-I)+\mathcal{N}((CA-I)^{n})\mbox{ }\\
&\Longrightarrow&ABA(CACAx)\in \mathcal{R}(AB-I)+\mathcal{N}((AB-I)^{n})\quad\mbox{ (by Lemma \ref{l1.1})}\\
&\Longrightarrow&(BA)^4x=ABA(CACAx)\in \mathcal{R}(AB-I)+\mathcal{N}((AB-I)^{n}).\
\end{array}
\]
Thus 
\[\begin{array}{lcl}x&=&(BA)^4x-((BA)^4-I)x\\
&=&(BA)^4x-(BA-I)((BA)^3+(BA)^2+(BA)+I)x\in\mathcal{R}(BA-I)+\mathcal{N}((BA-I)^n).
\end{array}
\]
Therefore $\mathcal{R}(BA-I))+\mathcal{N}((BA-I)^n)$ is closed.
\smallskip

The opposite implication goes similarly.
\end{proof}

\begin{lem}\label{l1.7} Let $A\in\mathcal{L}(X,Y)$ and $B,C\in\mathcal{L}(Y,X)$ such that $A(BA)^2=ABACA=ACABA=(AC)^2A.$
Then for all $n\in\mathbb{N}$, $\mathcal{R}((AC-I)^n)$ is closed if and only if $\mathcal{R}(BA-I)^n)$ is closed.
\end{lem}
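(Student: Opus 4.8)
The plan is to reuse the ``transfer via $ACA$'' mechanism of Lemmas~\ref{l1.4}--\ref{l1.6}, but with an extra algebraic step at the end. Suppose first that $\mathcal{R}((AC-I)^n)$ is closed and let $(BA-I)^nx_p\to y$ in $X$. From the identity $ACA(BA-I)^n=(AC-I)^nACA$ of (\ref{eqQ1}) and continuity of $ACA$ we get $(AC-I)^nACAx_p\to ACAy$, so $ACAy\in\overline{\mathcal{R}((AC-I)^n)}=\mathcal{R}((AC-I)^n)$; write $ACAy=(AC-I)^nu$. Now apply in turn $C$, $ABA$ and $B$, using the elementary intertwinings $C(AC-I)^n=(CA-I)^nC$, $ABA(CA-I)^n=(AB-I)^nABA$ (again (\ref{eqQ1})) and $B(AB-I)^n=(BA-I)^nB$. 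This yields
\[
BABACACA\,y=(BA-I)^n\bigl(BABACu\bigr)\in\mathcal{R}((BA-I)^n).
\]
Finally, the relation $A(BA)^2=(AC)^2A$ gives $A(BA)^2=A(CA)^2$, hence $(BA)^3=(BA)(CA)^2$ after multiplying on the left by $B$, and therefore $BABACACA=(BA)^2(CA)^2=(BA)^4$. Thus we have reached $(BA)^4y\in\mathcal{R}((BA-I)^n)$.

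At this point the device used in the earlier lemmas, namely writing $y=(BA)^4y-((BA)^4-I)y$ and absorbing the second summand, no longer applies, since $\mathcal{R}((BA-I)^n)$ does not contain $\mathcal{R}(BA-I)$ once $n\geq 2$. Instead I would invoke the coprimality of $\lambda^4$ and $(\lambda-1)^n$ in $\mathbb{C}[\lambda]$: choose $p,q\in\mathbb{C}[\lambda]$ with $p(\lambda)\lambda^4+q(\lambda)(\lambda-1)^n=1$ and evaluate at $BA$ to get $y=p(BA)(BA)^4y+q(BA)(BA-I)^ny$. Since $(BA)^4y\in\mathcal{R}((BA-I)^n)$ and both $p(BA)$ and $q(BA)$ commute with $(BA-I)^n$, each summand lies in $\mathcal{R}((BA-I)^n)$, whence $y\in\mathcal{R}((BA-I)^n)$ and $\mathcal{R}((BA-I)^n)$ is closed.

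For the converse, assume $\mathcal{R}((BA-I)^n)$ is closed and let $(AC-I)^ny_p\to x$ in $Y$. Pushing this sequence through $C$, then $ABA$, then $B$ exactly as above gives $(BA-I)^n(BABACy_p)\to BABACx$, so closedness yields $BABACx=(BA-I)^nw$ for some $w\in X$; applying $ACA$ and using (\ref{eqQ1}) once more lands us back with $ACA\cdot BABACx\in\mathcal{R}((AC-I)^n)$. Here one checks, from $ACA\cdot BA=ACABA=(AC)^2A$ (which forces $ACA\cdot(BA)^2=(AC)^3A$), that $ACA\cdot(BA)^2C=(AC)^4$, so in fact $(AC)^4x\in\mathcal{R}((AC-I)^n)$, and the same B\'ezout argument with $\lambda^4$ and $(\lambda-1)^n$ gives $x\in\mathcal{R}((AC-I)^n)$.

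The routine part is the bookkeeping with the intertwining identities together with the two algebraic simplifications $BABACACA=(BA)^4$ and $ACA\,(BA)^2C=(AC)^4$ that are forced by $A(BA)^2=ABACA=ACABA=(AC)^2A$. The one step that genuinely goes beyond Lemma~\ref{l1.6}, and which I expect to be the (modest) crux, is the passage from $(BA)^4y\in\mathcal{R}((BA-I)^n)$ to $y\in\mathcal{R}((BA-I)^n)$ --- this is exactly where the coprimality of $\lambda^4$ and $(\lambda-1)^n$ is needed, and it is the reason the proof is not a verbatim copy of the preceding lemma.
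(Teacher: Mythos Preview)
Your argument is correct. The intertwinings you quote are exactly those of (\ref{eqQ1}) and (\ref{eqQ2}); the reductions $BABACACA=(BA)^4$ and $ACA(BA)^2C=(AC)^4$ follow from the hypotheses as you indicate; and the B\'ezout step with $p(\lambda)\lambda^4+q(\lambda)(\lambda-1)^n=1$ is the right way to pass from $(BA)^4y\in\mathcal{R}((BA-I)^n)$ to $y\in\mathcal{R}((BA-I)^n)$ once $n\geq 2$, since $p(BA)$ and $q(BA)$ commute with $(BA-I)^n$.

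Your route, however, differs from the paper's. The paper does not redo the sequence argument at level $n$; instead it observes (following Barnes) that for each $n$ there exist $B_n,C_n\in\mathcal{L}(Y,X)$ with $(I-AC)^n=I-AC_n$ and $(I-BA)^n=I-B_nA$, namely $B_n=\sum_{k=1}^n(-1)^{k-1}\binom{n}{k}B(AB)^{k-1}$ and $C_n=\sum_{k=1}^n(-1)^{k-1}\binom{n}{k}(CA)^{k-1}C$, checks that $A,B_n,C_n$ again satisfy $A(B_nA)^2=AB_nAC_nA=AC_nAB_nA=(AC_n)^2A$, and then simply invokes Lemma~\ref{l1.6}. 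In effect, the paper reduces the $n$-th power to the first power by changing $B,C$, whereas you keep $B,C$ fixed and upgrade the final algebraic step via the coprimality of $\lambda^4$ and $(\lambda-1)^n$. The paper's reduction is shorter and recycles Lemma~\ref{l1.6} wholesale, at the cost of a verification (left implicit there) that the four-term identity persists for $B_n,C_n$; your direct approach is longer but entirely self-contained and makes explicit precisely where the $n\geq 2$ case needs more than the ``$(BA)^4-I$ factors through $BA-I$'' trick.
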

\begin{proof} As in the presentation before \cite[Proposition]{Bar}, for each $n\in\mathbb{N}$ there exists $B_n$ and $C_n\in\mathcal{L}(Y,X)$ such that $$(I-AC)^n=I-AC_n\mbox{ and }(I-BA)^n=I-B_nA.$$
Indeed, we have $B_n=\displaystyle{\sum_{k=1}^n}(-1)^{k-1}({}_k^n)B(AB)^{k-1}$ and $C_n=\displaystyle{\sum_{k=1}^n}(-1)^{k-1}({}_k^n)(CA)^{k-1}C$. It is easy to check that $$A(B_nA)^2=AB_nAC_nA=AC_nAB_nA=(AC_n)^2A.$$ Then it follows from Lemma \ref{l1.6} that $\mathcal{R}((AC-I)^n)$ is closed if and only if $\mathcal{R}((BA-I)^n)$ is closed.
\end{proof}

\noindent {\bf {\it Proof of Theorem \ref{thm.1}}} : The proof follows at once from Lemmas \ref{l1.1}-\ref{l1.7}.
\section{Applications and concluding remarks}
A bounded operator $T\in\mathcal{L}(X)$ is said to be {\it upper semi-Weyl} operator if $T$ is upper semi-Fredholm with $ind(T)\leq 0$, and $T$ is said to be {\it lower semi-Weyl} operator if $T$ is lower semi-Fredholm with $ind(T)\geq 0$. If $T$ is both upper and lower semi-Fredholm then $T$ is said to {\it Weyl} operator. Then $T$ is weyl operator precisely when $T$ is a Fredholm operator with index zero. The {\it  upper semi-Weyl spectrum}  $\sigma_{uw}(T)$, the {\it lower semi-Weyl spectrum }  $\sigma_{lw}(T)$ and the {\it Weyl spectrum} $\sigma_w(T)$ of $T$ are defined by 
$$\sigma_{uw}(T)=\{\lambda\in\mathbb{C}\,:\,T-\lambda I\mbox{ is not upper semi-Weyl}\},$$
$$\sigma_{lw}(T)=\{\lambda\in\mathbb{C}\,:\,T-\lambda I\mbox{ is not lower semi-Weyl}\},$$
$$\sigma_w(T)=\sigma_{uw}(T)\cup\sigma_{lw}(T).$$
From Lemma \ref{l1.4} and Lemma \ref{l1.3} we deduce the following result 
\begin{prop} Let $A\in\mathcal{L}(X,Y)$ and $B,C\in\mathcal{L}(Y,X)$ such that $A(BA)^2=ABACA=ACABA=(AC)^2A.$ Then $$\sigma_*(AC)\setminus\{0\}=\sigma_*(BA)\setminus\{0\}\,\mbox{ for }\sigma_*\in\{\sigma_{uw},\sigma_{lw},\sigma_w\}.$$
\end{prop}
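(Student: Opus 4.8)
The plan is to deduce the Proposition from the lemmas already established, together with the standard index theory for semi-Fredholm operators. First I would recall that $T$ is upper semi-Weyl if and only if $T$ is upper semi-Fredholm (i.e.\ $\mathcal{R}(T)$ closed and $\dim\mathcal{N}(T)<\infty$) and $\mathrm{ind}(T)\le 0$; since $c_0'(T)=\dim\mathcal{N}(T)$ and $c_0(T)=\mathrm{codim}\,\mathcal{R}(T)=\dim X/\mathcal{R}(T)$, the quantities $c_0(AC-I)=c_0(BA-I)$ and $c_0'(AC-I)=c_0'(BA-I)$ given by Lemma~\ref{l1.4} and Lemma~\ref{l1.3} already control the relevant dimensions, while Lemma~\ref{l1.7} (with $n=1$) handles closedness of the range. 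Thus for $\lambda\neq 0$, writing $AC-\lambda I=-\lambda(I-\lambda^{-1}AC)$ and noting that $\lambda^{-1}AC$, $\lambda^{-1}BA$, $\lambda^{-1}C$ still satisfy the hypothesis $A(BA)^2=ABACA=ACABA=(AC)^2A$ after the obvious rescaling, one reduces at once to comparing $AC-I$ with $BA-I$.

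Next I would assemble the equivalence. For $\lambda\notin\sigma_{uw}(AC)\cup\{0\}$, the operator $AC-\lambda I$ is upper semi-Fredholm with nonpositive index, hence (by the punctured-neighbourhood theorem, or equivalently because semi-Fredholmness is preserved under the rescaling) $I-\lambda^{-1}AC$ has closed range, finite-dimensional kernel, and index $\le 0$. Lemma~\ref{l1.7} transfers the closed-range property to $I-\lambda^{-1}BA$; $c_0'$-equality transfers finiteness and the exact value of $\dim\mathcal{N}$; and the equality $c_0(I-\lambda^{-1}AC)=c_0(I-\lambda^{-1}BA)$ from Lemma~\ref{l1.4} transfers the codimension of the range. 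When both kernel and cokernel are finite-dimensional the index is literally $\dim\mathcal{N}-\mathrm{codim}\,\mathcal{R}=c_0'-c_0$, so the index is preserved exactly and the semi-Weyl condition $\mathrm{ind}\le 0$ passes over. The same argument with the roles of $AC$ and $BA$ interchanged gives the reverse inclusion, yielding $\sigma_{uw}(AC)\setminus\{0\}=\sigma_{uw}(BA)\setminus\{0\}$. The lower semi-Weyl case is symmetric, swapping the roles of $c_0$ and $c_0'$ and using $\mathrm{ind}\ge 0$, and $\sigma_w=\sigma_{uw}\cup\sigma_{lw}$ then follows by taking unions.

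The only genuinely delicate point is the index bookkeeping: a priori $AC-\lambda I$ upper semi-Fredholm only forces $\dim\mathcal{N}<\infty$, with $\mathrm{codim}\,\mathcal{R}$ possibly infinite, in which case the index is $-\infty$ and one must check that the inequality $\mathrm{ind}\le 0$ is still faithfully transported. This is handled by observing that $c_0(AC-I)=c_0(BA-I)$ is an equality of elements of $\mathbb{Z}_+\cup\{\infty\}$, so $\mathrm{codim}\,\mathcal{R}(AC-I)$ is infinite exactly when $\mathrm{codim}\,\mathcal{R}(BA-I)$ is; combined with the equality of the (finite) kernel dimensions, the extended-real index $c_0'-c_0$ is literally the same for $AC-I$ and $BA-I$, so $\mathrm{ind}\le 0$ (resp.\ $\ge 0$, resp.\ $=0$) holds for one iff it holds for the other. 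With that observation the proof reduces to quoting Lemmas~\ref{l1.4}, \ref{l1.3} and \ref{l1.7} and the definitions, and I would write it up in two or three lines.

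\begin{proof} Let $\lambda\in\mathbb{C}\setminus\{0\}$. Replacing $A$ by $\lambda^{-1}A$ (equivalently $C$ by $\lambda^{-1}C$ and leaving $B$ unchanged) we may assume $\lambda=1$, since the hypothesis $A(BA)^2=ABACA=ACABA=(AC)^2A$ is invariant under this rescaling and $AC-\lambda I$, $BA-\lambda I$ are scalar multiples of $I-\lambda^{-1}AC$, $I-\lambda^{-1}BA$. By Lemma~\ref{l1.7} with $n=1$, $\mathcal{R}(AC-I)$ is closed if and only if $\mathcal{R}(BA-I)$ is closed. By Lemma~\ref{l1.3}, $\dim\mathcal{N}(AC-I)=c_0'(AC-I)=c_0'(BA-I)=\dim\mathcal{N}(BA-I)$, and by Lemma~\ref{l1.4}, $\dim X/\mathcal{R}(AC-I)=c_0(AC-I)=c_0(BA-I)=\dim X/\mathcal{R}(BA-I)$, these being equalities in $\mathbb{Z}_+\cup\{\infty\}$. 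Consequently $AC-I$ is upper (resp.\ lower) semi-Fredholm iff $BA-I$ is, and in that case $\mathrm{ind}(AC-I)=\dim\mathcal{N}(AC-I)-\dim X/\mathcal{R}(AC-I)=\dim\mathcal{N}(BA-I)-\dim X/\mathcal{R}(BA-I)=\mathrm{ind}(BA-I)$ as elements of $\mathbb{Z}\cup\{\pm\infty\}$. Hence $AC-I$ is upper semi-Weyl iff $BA-I$ is, $AC-I$ is lower semi-Weyl iff $BA-I$ is, and $AC-I$ is Weyl iff $BA-I$ is. Undoing the rescaling, $AC-\lambda I$ is upper semi-Weyl (resp.\ lower semi-Weyl, resp.\ Weyl) iff $BA-\lambda I$ is, for every $\lambda\neq 0$. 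Therefore $\sigma_*(AC)\setminus\{0\}=\sigma_*(BA)\setminus\{0\}$ for $\sigma_*\in\{\sigma_{uw},\sigma_{lw}\}$, and taking the union gives the statement for $\sigma_w$ as well.
\end{proof}
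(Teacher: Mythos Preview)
Your proof is correct and follows exactly the route the paper indicates: the proposition is deduced from the equalities $c_0(AC-I)=c_0(BA-I)$ (Lemma~\ref{l1.4}) and $c_0'(AC-I)=c_0'(BA-I)$ (Lemma~\ref{l1.3}), together with the closed-range equivalence (Lemma~\ref{l1.6}/\ref{l1.7}), which you spell out in more detail than the paper does. Two cosmetic points: since $AC\in\mathcal{L}(Y)$ while $BA\in\mathcal{L}(X)$, the cokernel should read $\dim Y/\mathcal{R}(AC-I)$ rather than $\dim X/\mathcal{R}(AC-I)$; and your parenthetical ``equivalently $C$ by $\lambda^{-1}C$ and leaving $B$ unchanged'' is not actually equivalent (it rescales $AC$ but not $BA$)---the substitution $A\mapsto\lambda^{-1}A$ that you in fact use is the correct one.
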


An operator $T\in\mathcal{L}(X)$ is said to be {\it Riesz} operator if $T-\lambda I$ is a Fredholm operator for all $0\neq \lambda\in\mathbb{C}$. Then the following proposition is an immediate consequence of Theorem \ref{thm.1}

\begin{prop}\label{prop3.1} Let $A\in\mathcal{L}(X,Y)$ and $B,C\in\mathcal{L}(Y,X)$ such that $A(BA)^2=ABACA=ACABA=(AC)^2A.$ Then $AC$ is a Riesz operator if and only if $BA$ is a Riesz operator.
\end{prop}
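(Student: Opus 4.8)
\textbf{Proof proposal for Proposition \ref{prop3.1}.}

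The plan is to reduce the Riesz-operator statement to the Fredholm (equivalently, Browder or Weyl) spectra already controlled by the preceding lemmas, and then use the characterization of Riesz operators via their spectrum. First I recall that $T\in\mathcal{L}(X)$ is a Riesz operator if and only if $\sigma_e(T)\subseteq\{0\}$, where $\sigma_e$ denotes the essential (Fredholm) spectrum; equivalently $T-\lambda I$ is Fredholm for every $\lambda\neq 0$. Since the Fredholm operators form a regularity — indeed, the lower semi-Fredholm operators are exactly $R_4$ and the upper semi-Fredholm operators are exactly $R_9$ in the notation of the Definition above — Theorem \ref{thm.1} applies: $\sigma_{R_4}(AC)\setminus\{0\}=\sigma_{R_4}(BA)\setminus\{0\}$ and $\sigma_{R_9}(AC)\setminus\{0\}=\sigma_{R_9}(BA)\setminus\{0\}$. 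The Fredholm spectrum is the union of the lower and upper semi-Fredholm spectra, so intersecting with $\mathbb{C}\setminus\{0\}$ gives
\[
\sigma_e(AC)\setminus\{0\}=\sigma_e(BA)\setminus\{0\}.
\]

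From here the conclusion is immediate: $AC$ is Riesz iff $\sigma_e(AC)\subseteq\{0\}$ iff $\sigma_e(AC)\setminus\{0\}=\varnothing$ iff $\sigma_e(BA)\setminus\{0\}=\varnothing$ iff $BA$ is Riesz. So the argument is essentially a one-line deduction once one identifies which regularities from the list encode the semi-Fredholm classes and invokes Theorem \ref{thm.1} for those indices.

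If one prefers to avoid relying on the somewhat delicate bookkeeping of exactly which $R_i$ is which (the paper says ``operators of $R_4$ and $R_9$ are lower, resp.\ upper semi-Fredholm''), an alternative is to route through the Weyl/Browder spectra instead: by the Proposition just before, $\sigma_w(AC)\setminus\{0\}=\sigma_w(BA)\setminus\{0\}$, and one checks that $T-\lambda I$ is Fredholm for all $\lambda\neq 0$ iff $T-\lambda I$ is Fredholm of index zero for all $\lambda\neq 0$ in the punctured plane — this is because the index is locally constant on the semi-Fredholm domain and the unbounded component of $\rho_{\mathrm{sf}}(T)$ meets the resolvent set where the index is $0$, so on any connected piece of $\mathbb{C}\setminus\{0\}$ containing a point of $\rho(T)$ the index vanishes; since $\sigma_e(T)\setminus\{0\}=\varnothing$ forces connectedness of $\mathbb{C}\setminus\{0\}\setminus\sigma_e(T)$ there, Fredholm-for-all-$\lambda\neq0$ already entails index zero. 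Hence ``$AC$ Riesz'' is equivalent to ``$\sigma_w(AC)\subseteq\{0\}$'', and the symmetry in $\sigma_w$ finishes the proof.

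The only real point requiring care — hardly an obstacle — is the passage between ``Fredholm for all nonzero $\lambda$'' and the vanishing of the punctured essential spectrum, together with making sure the union of the two semi-Fredholm spectra really does produce the full essential spectrum after deleting $0$; both are standard. No new estimate is needed beyond Theorem \ref{thm.1}, so I would keep the write-up to a few lines.
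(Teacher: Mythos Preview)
Your argument is correct and matches the paper's own treatment: the paper simply records Proposition~\ref{prop3.1} as ``an immediate consequence of Theorem~\ref{thm.1}'', and your write-up just makes explicit the choice $i=4,9$ (lower/upper semi-Fredholm) that yields $\sigma_e(AC)\setminus\{0\}=\sigma_e(BA)\setminus\{0\}$. The alternative detour through $\sigma_w$ is unnecessary but harmless.
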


Following \cite{SZZ}, an operator $T\in\mathcal{L}(X)$ is said to be {\it generalized Drazin-Riesz} operator if there exists $S\in\mathcal{L}(X)$ such that 
$$TS=ST,\,STS=S\mbox{ and }T^2S-T\mbox{ is a Riesz operator}.$$
The operator $S$ is called a {\it generalized Drazin-Riesz inverse} of $T$.
\begin{thm} Let $A\in\mathcal{L}(X,Y)$ and $B,C\in\mathcal{L}(Y,X)$ such that $A(BA)^2=ABACA=ACABA=(AC)^2A.$ Then $AC$ is generalized Drazin-Riesz invertible if and only if $BA$ is generalized Drazin-Riesz invertible. In this case, if $S$ is a generalized Drazin-Riesz inverse of $AC$ then $BS^2A$ is a generalized Drazin-inverse of $BA$.
\end{thm}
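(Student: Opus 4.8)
The plan is to reduce the generalized Drazin--Riesz invertibility of $AC$ and $BA$ to the common spectral properties already established, and then to verify the explicit formula for the inverse by a direct computation that mirrors the bridging trick used throughout Lemmas \ref{l1.1}--\ref{l1.7}. First I would recall a spectral characterization of generalized Drazin--Riesz invertibility: an operator $T$ is generalized Drazin--Riesz invertible if and only if $0$ is not an accumulation point of the set $\{\lambda : T-\lambda I \text{ is not Browder}\}$ together with an associated decomposition, or equivalently (following \cite{SZZ}) $0 \notin \mathrm{acc}\,\sigma_{b}(T)$ where $\sigma_b$ is the Browder spectrum, plus a quasi-nilpotent-type condition handled by the Riesz remainder. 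Since the Browder spectrum is exactly $\sigma_{R_7 \cup R_{10}}$-type data (upper and lower semi-Browder), Theorem \ref{thm.1} gives $\sigma_b(AC)\setminus\{0\} = \sigma_b(BA)\setminus\{0\}$, hence $0$ is an accumulation point of $\sigma_b(AC)$ iff it is one of $\sigma_b(BA)$. This yields the equivalence ``$AC$ is generalized Drazin--Riesz invertible $\iff$ $BA$ is generalized Drazin--Riesz invertible.''

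Next I would prove the formula for the inverse. Suppose $S \in \mathcal{L}(X)$ satisfies $(AC)S = S(AC)$, $S(AC)S = S$, and $(AC)^2 S - AC$ is Riesz; set $T = BS^2A$. The goal is to check $(BA)T = T(BA)$, $T(BA)T = T$, and $(BA)^2 T - BA$ is Riesz. For the commutation and the $STS=S$ analogue, the key is to exploit the hypothesis $A(BA)^2 = ABACA = ACABA = (AC)^2A$ to push $A$'s and $C$'s past each other: one shows $A(BA)T = A\,BA\,BS^2A$ can be rewritten, using $ABABA$-type identities and the fact that $S$ commutes with $AC$ and with polynomials in $AC$ (so $S$ commutes with $AC_nA$-type expressions), to match $AT(BA)$. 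Concretely, the commutation $SAC = ACS$ propagates to $SABA = ABAS'$-type relations once multiplied through by the right operators, and the identity $A(BA)^2 = (AC)^2 A$ lets one trade $BABA$ for $ACAC$ after applying $A$ on the left; iterating yields $(BA)T = T(BA)$. The relation $T(BA)T = T$ follows similarly: $BS^2A\cdot BA\cdot BS^2A = BS^2(ABAB)S^2 A$, and using $ABABA = ACABA$ and $S(AC)S = S$ repeatedly collapses the middle to recover $BS^2A = T$.

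Finally, for the Riesz remainder: I would show $(BA)^2 T - BA = B\big((AC)^2S - AC\big)^{?}A$ or, more carefully, relate $(BA)^2 T - BA = (BA)^2 BS^2A - BA$ to $B\,R\,A$ where $R = (AC)^2 S - AC$ is the given Riesz operator, again via the exchange identities. Since Riesz operators are characterized by the Fredholm (equivalently Browder, equivalently any of the $R_i$) spectrum being $\{0\}$ off the origin, and since Proposition \ref{prop3.1} already tells us that the Riesz property transfers between the $AC$- and $BA$-sides under our hypothesis, the computed operator $(BA)^2T - BA$ inherits the Riesz property from $R$. I expect the main obstacle to be the bookkeeping in the commutation identities: unlike the clean case $ABA = ACA$, here the fourth-power relation $A(BA)^2 = (AC)^2A$ means one must carry the $(BA)^4$ bridging factor (exactly as in the proofs of Lemmas \ref{l1.4}--\ref{l1.7}) through the algebra of $S$, and verifying that $S$, which a priori only commutes with $AC$, interacts correctly with $ABA$ requires inserting the idempotent-like element $S(AC)$ at the right places and invoking all four equalities in the hypothesis, not just one.
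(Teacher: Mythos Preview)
Your outline for the equivalence via a spectral characterization (accumulation of the Browder spectrum at $0$) is a legitimate shortcut and differs from the paper, which proves both directions constructively by building the inverse. Your sketches of the identities $(BA)T=T(BA)$ and $T(BA)T=T$ are on the right track; the paper carries them out by repeatedly using $S=S^{2}(AC)=(AC)S^{2}$ (from $S(AC)S=S$ and commutativity) to insert factors of $AC$ next to $A$, and then trading $(AC)^{2}A$ for $A(BA)^{2}$.

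There is, however, a genuine gap in your treatment of the Riesz remainder. The computation gives
\[
(BA)^{2}T-BA \;=\; B\,(SAC-I)\,A \;=\; BPA,\qquad P:=SAC-I,
\]
and $P$ is \emph{not} the Riesz operator $R=(AC)^{2}S-AC$; rather $R=PAC$. So the factorization ``$B\cdot(\text{Riesz})\cdot A$'' you aim for does not hold, and even if it did, Riesz operators are not a two-sided ideal, so $B R A$ Riesz would not follow from $R$ Riesz. Invoking Proposition \ref{prop3.1} ``under our hypothesis'' is also not enough as stated: that proposition transfers the Riesz property between $AC$ and $BA$ for the \emph{original} triple $(A,B,C)$, whereas here you need to pass from $PAC$ Riesz to $BPA$ Riesz. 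The missing step---and this is exactly what the paper does---is to verify that the \emph{new} triple $(A',B',C')=(PA,\,B,\,C)$ again satisfies
\[
A'(B'A')^{2}=A'B'A'C'A'=A'C'A'B'A'=(A'C')^{2}A',
\]
which requires a separate (and not entirely obvious) computation using all four of the original equalities together with the fact that $P$ commutes with $AC$. Only then does Proposition \ref{prop3.1}, applied to $(PA,B,C)$, yield that $B(PA)=(BA)^{2}T-BA$ is Riesz. Without this verification your argument for the Riesz condition is incomplete.
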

\begin{proof}
Assume that $AC$ is generalized Drazin-Riesz invertible. then there exists $S\in\mathcal{L}(X)$ such that $S(AC)=(AC)S$, $S(AC)S=S$ and $(AC)^2S-AC$ is Riesz. Set $T=BS^2A$ and we shall show that  $$T(BA)=(BA)T, \,T(BA)T=T\mbox{ and }(BA)^2T-BA \mbox{ is Riesz operator}.$$
For the first equality, we have \[\begin{array}{lcl}T(BA)&=&BS^2A(BA)\\
&=&BS^2(AC)S^2(AC)A(BA)\\
&=&BS^4(AC)^2A(CA)\\
&=&B(AC)^3S^4A\\
&=&B(AB)S^2A\\
&=&BAT.
\end{array}
\]

For the second, 
 \[\begin{array}{lcl}T^2(BA)&=&BS^2ABS^2ABA\\
&=&BS^2ABS^2(AC)S^2(AC)ABA\\
&=&BS^2ABS^2(AC)S^2(AC)ACA\\
&=&BS^2AB(AC)(AC)S^4ACA\\
&=&BS^2AC(AC)(AC)S^4ACA\\
&=&BS^2ACS^2ACA\\
&=&BS^2A\\
&=&T.
\end{array}
\]
Set $P=ACS-I=SAC-I$. Then 
\[\begin{array}{lcl}T(BA)^2-BA&=&BS^2A(BA)^2-BA\\
&=&BS^2(AC)^2A-BA\\
&=&BS^ACA-BA\\
&=&B(SAC-I)A\\
&=&BPA.
\end{array}
\]
Hence it remains to show that $BPA$ is a Riesz operator. We have
\[\begin{array}{lcl}(PA)B(PA)B(PA)&=&(SACA-A)B(SACA-A)B(ACSA-A)\\
&=&(SACA-A)B(SACABA-ABA)(CSA-A)\\
&=&(SACA-A)B(SACACA-ABA)(CSA-A)\\
&=&[(SACA-A)B(SACACA)-(SACA-A)BABA](CSA-A)\\
&=&[(SACA-A)B(SACACA)-(SACA-A)BACA](CSA-A)\\
&=&(SACA-A)B(SACACA-ACA)(CSA-A)\\
&=&(SACA-A)B(SACA-A)C(ACSA-A)\\
&=&(PA)B(PA)C(PA).
\end{array}
\]
In the same way, one can prove that 
$$(PA)B(PA)B(PA)=(PA)B(PA)C(PA)=(PA)C(PA)B(PA)=(PA)C(PA)C(PA).$$
Since $(PA)C=(AC)^2S-AC$ is a Riesz operator by assumption, then it follows from Proposition \ref{prop3.1} that $B(PA)$ is a Riesz operator. Therefore $BA$ is generalized Drazin-Riesz invertible and $BS^2A$ is a generalized Drazin-inverse of $BA$.
\smallskip

In similar way, we prove the opposite implication.
\end{proof}

\begin{rem} If $A$ and $B\in\mathcal{L}(X)$ such that $ABA=A^2$ and $BAB=B^2$, then 
\begin{equation}\label{deq1}A(BA)^2=ABAIA=AIABA=(AI)^2A
\end{equation} and 
\begin{equation}\label{deq2} B(AB)^2=BABIB=BIBAB=(BI)^2B.
\end{equation}
Then it follows from (\ref{deq1}) and (\ref{deq2} that $A$, $B$, $BA$ and $AB$ share above spectral properties. So we retrieve the results of \cite{Du}.
\end{rem}

In the following two examples, the common spectral properties for $AC$ and $BA$ can only followed directly from the above results, but not from the corresponding ones in \cite{Sch1, Sch2, Du, ZZ1, TK}.
\begin{ex}\label{Ex1} Let $P$ be a non trivial idempotent on $X$. Let $A$, $B$ and $C$ defined on $X\oplus X\oplus X$ by
\[ 
A = \left(\begin{array}{ccc} 0 & I & 0\\
                                0 & P & 0\\
                                0 & 0 & 0
                                                  \end{array}  \right)\mbox{,  }
B = \left(\begin{array}{ccc} I & 0 & 0\\
                                0 & I & 0\\
                                0 & 0 & 0
                                                  \end{array}  \right)  \mbox{ and }
 C = \left(\begin{array}{ccc} 0 & 0 & 0\\
                                I & 0 & 0\\
                                0 & I & 0
                                                  \end{array}  \right).
\]                                
Then $A(BA)^2=ABACA=ACABA=(AC)^2A$, while  $ABA \neq ACA$ and $BAB\neq B^2$.
\end{ex}

\begin{ex} Let $A$ and $B$ be as in Example \ref{Ex1}  and let $C$ be defined on $X\oplus X\oplus X$ by
\[ 
 C = \left(\begin{array}{ccc} 0 & 0 & 0\\
                               P & 0 & 0\\
                                0 & I & 0
                                                  \end{array}  \right).
\]                                
Then $A(BA)^2=ABACA=ACABA=(AC)^2A$, while  $ABA \neq ACA$ and $BAB\neq B^2$. 
\end{ex}


\end{document}